\documentclass[11pt]{amsart}   
\usepackage{amssymb,amstext,amsmath,amscd,amsthm,amsfonts,enumerate,graphicx,latexsym,accents}
\usepackage{array}
\usepackage{longtable}
\usepackage{booktabs}
\usepackage{mathdots}
\usepackage{graphicx}
\usepackage[pagebackref,colorlinks=true,linkcolor=blue,urlcolor=blue]{hyperref}
\usepackage{stmaryrd}
\usepackage[initials, lite]{amsrefs}
\usepackage{color}
\usepackage[onehalfspacing]{setspace}
\usepackage{tabularx}
\usepackage{amsfonts} 
\usepackage{paralist}
\usepackage{aliascnt}
\usepackage{amscd}
\usepackage{blkarray}
\usepackage{mathbbol}
\usepackage{setspace}
\usepackage[inner=3cm,outer=3cm, bottom=3.2cm]{geometry}
\usepackage{tikz, tikz-cd}
\usepackage{amsxtra, epsfig,epic,graphics}
\usepackage{tikz}
\usetikzlibrary{matrix}
\usetikzlibrary{cd}
\usetikzlibrary{arrows,calc}
\allowdisplaybreaks

\BibSpec{collection.article}{%
	+{}  {\PrintAuthors}                {author}
	+{,} { \textit}                     {title}
	+{.} { }                            {part}
	+{:} { \textit}                     {subtitle}
	+{,} { \PrintContributions}         {contribution}
	+{,} { \PrintConference}            {conference}
	+{}  {\PrintBook}                   {book}
	+{,} { }                            {booktitle}
	+{,} { }                            {series}
	+{, vol.} { }                            {volume}
	+{,} { }                            {publisher}
	+{,} { \PrintDateB}                 {date}
	+{,} { pp.~}                        {pages}
	+{,} { }                            {status}
	+{,} { \PrintDOI}                   {doi}
	+{,} { available at \eprint}        {eprint}
	+{}  { \parenthesize}               {language}
	+{}  { \PrintTranslation}           {translation}
	+{;} { \PrintReprint}               {reprint}
	+{.} { }                            {note}
	+{.} {}                             {transition}
	+{}  {\SentenceSpace \PrintReviews} {review}
}
\AtBeginDocument{%
	\def\MR#1{}
}

\newcommand{\m}{\mathfrak{m}}

\newcommand{\reg}{\normalfont\text{reg}}

\newcommand{\Tor}{\normalfont\text{Tor}}

\newtheorem{theorem}{Theorem}[section]

\newaliascnt{headcor}{headthm}

\aliascntresetthe{headcor}

\newaliascnt{headconj}{headthm}

\aliascntresetthe{headconj}

\newaliascnt{corollary}{theorem}
\newtheorem{corollary}[corollary]{Corollary}
\aliascntresetthe{corollary}

\def\min{\operatorname{min}}

\def\reg{\operatorname{reg}}

\def\dim{\operatorname{dim}}

\def\m{\mathfrak m}

\newaliascnt{lemma}{theorem}
\newtheorem{lemma}[lemma]{Lemma}
\aliascntresetthe{lemma}

\newaliascnt{conjecture}{theorem}
\newtheorem{conjecture}[conjecture]{Conjecture}
\aliascntresetthe{conjecture}

\newaliascnt{proposition}{theorem}
\newtheorem{proposition}[proposition]{Proposition}
\aliascntresetthe{proposition}

\theoremstyle{definition}
\newaliascnt{definition}{theorem}
\newtheorem{definition}[definition]{Definition}
\aliascntresetthe{definition}

\newaliascnt{notation}{theorem}
\newtheorem{notation}[notation]{Notation}
\aliascntresetthe{notation}

\newaliascnt{example}{theorem}

\aliascntresetthe{example}

\newaliascnt{examples}{theorem}

\aliascntresetthe{examples}

\newaliascnt{remark}{theorem}
\newtheorem{remark}[remark]{Remark}
\aliascntresetthe{remark}

\newaliascnt{question}{theorem}

\aliascntresetthe{question}

\newaliascnt{questions}{theorem}

\aliascntresetthe{questions}

\newaliascnt{problem}{theorem}

\aliascntresetthe{problem}

\newaliascnt{construction}{theorem}

\aliascntresetthe{construction}

\newaliascnt{setting}{theorem}

\aliascntresetthe{setting}

\newaliascnt{algorithm}{theorem}

\aliascntresetthe{algorithm}

\newaliascnt{observation}{theorem}

\aliascntresetthe{observation}

\newaliascnt{defprop}{theorem}

\aliascntresetthe{defprop}

\DeclareFontFamily{OT1}{pzc}{}
\DeclareFontShape{OT1}{pzc}{m}{it}{<-> s * [1.100] pzcmi7t}{}
\DeclareMathAlphabet{\mathchanc}{OT1}{pzc}{m}{it}

\def\equationautorefname~#1\null{(#1)\null}
\def\sectionautorefname~#1\null{Section #1\null}
\def\subsectionautorefname~#1\null{\S #1\null}

\begin{document}

\baselineskip=16pt

\title[Restrictions on the Betti tables of Licci Ideals]{ 
Restrictions on the Betti tables of Licci Ideals}
\date\today

\thanks{This material is based upon work supported by the National Science Foundation under Grant No. DMS-1928930 and by the Alfred P. Sloan Foundation under grant G-2021-16778, while the authors were in residence at the Simons Laufer Mathematical Sciences Institute (formerly MSRI) in Berkeley, California, during the Spring 2024 semester.
{\em Mathematics Subject Classification.}
Primary 14C20, 13A30; Secondary 14M10, 14J17.}

\keywords{Licci, Boij-Soderberg}

\thanks{The second and third authors were partially supported by NSF grants DMS-2502707 and DMS-2502706, respectively.}

\author{Craig Huneke}
\address{Craig Huneke, Department of Mathematics, University of Virginia, Charlottesville, VA 22904} \email{huneke@uva.edu}

\author{Claudia Polini}
\address{Claudia Polini, Department of Mathematics, University of Notre Dame, Notre Dame, Indiana 46556} \email{cpolini@nd.edu}

\author{Bernd Ulrich}
\address{Bernd Ulrich, Department of Mathematics, Purdue University, West Lafayette, Indiana 47907} \email{bulrich@purdue.edu}
\begin{abstract} We introduce several conjectures which mainly deal with restrictions on the Betti tables of licci ideals. We focus on a series of questions that compare the number of generators of homogeneous licci ideals in polynomial rings to the maximal last shift in their graded free resolution. We prove these conjectures
in a large number of cases.
\end{abstract}
 \maketitle

\section{Introduction} 

Linkage (or liaison) is a technique for classifying and investigating ideals and subvarieties, and it has attracted considerable recent attention from a variety of perspectives \cites{KTY2013, Chong, RT19, RTY2020, ERT2020, SW21, RT2023, CVW2019, GNW2024, GNW2412, GNW25, CGNW25, CGNW2503, JRS, MMM, HPU26-3, HPU26-2, HPU26-4}. Let 
$R$ be a Gorenstein ring. Two proper ideals 
$I$ and 
$J$ of
$R$ are called {\it linked} if there is a complete intersection ideal $\mathfrak{a}$
for which
\[
J=\mathfrak{a}:I \qquad \text{and} \qquad I=\mathfrak{a}:J .
\]
Repeating this process generates the {\it linkage class} of an unmixed ideal 
$I$, namely the set of all ideals reachable from 
$I$ by finitely many successive links. The ideals lying in the linkage class of a complete intersection are especially important, and are known as {\it licci} ideals. Since the Cohen-Macaulay property is invariant within a linkage class \cite{PS}, every licci ideal is in particular Cohen-Macaulay.

The interesting paper \cite{MSS} proved
among many other results that if $I$ is a square-free monomial ideal of codimension two in a polynomial ring $S$ such that
$S/I$ is Cohen-Macaulay, then the ideal $I$ is {\it of linear type}, i.e. the Rees algebra and the symmetric algebra of $I$
are naturally isomorphic, and moreover these algebras are Cohen-Macaulay.
Because such ideals are licci, and licci ideals are strongly Cohen-Macaulay \cite{H82}*{Theorem 1.14}, this result is actually equivalent to saying that $I$ satisfies the Artin-Nagata condition $G_{\infty},$ which simply means that
$$\mu(I_P)\leq \dim(S_P)$$ for all primes $P$ containing $I$, where $\mu(-)$ denotes the minimal number of generators (\cite{HSV82}*{Theorem 12.9}). Since
$I$ is a square-free monomial ideal, one only needs to check this
inequality at all primes which are generated by subsets of the variables,
and then one can simply use so-called monomial localization by setting
the variables outside the prime $P$ to 1.  The result is still another
licci square-free monomial ideal and the variables in $P$ become generators of the maximal ideal of the new ring. Thus, the result is true if and only if, for a square-free licci monomial ideal $I$, $$\mu(I)\leq \dim(S).$$

This fact in codimension two leads to our first conjecture:

\begin{conjecture}\label{monconj} Let $S$ be a polynomial ring over
a field, and let $I$ be a licci square-free monomial ideal. Then $\mu(I)\leq \dim(S)$. Equivalently, the ideal $I$ is of linear type and its Rees algebra is Cohen-Macaulay.\end{conjecture}

After computing many examples and proving some cases of
this conjecture, which are included below, it appears that the right conjecture comes from consideration of the free resolution of the
ideal, and is much more general than just the case of square-free monomial ideals.
To state the conjecture, let $I$ be a codimension $c$ homogeneous
licci ideal in a polynomial ring $S$ over a field.  We write the minimal homogeneous $S$-free resolution of $S/I$ in the form 

\begin{equation}\label{introresolution}
0\to \bigoplus_{j\in\mathbb{Z}}S(-j)^{b_{c,j}}  \to \cdots \to \bigoplus_{j\in\mathbb{Z}}S(-j)^{b_{1,j}}\to S\to S/I\to 0 \end{equation}
(since $S/I$ is Cohen-Macaulay, the
length of the resolution is $c$).
We define $T_i(S/I) := \max\{j | b_{i,j}(S/I) \ne 0\}$, and
$t_i(S/I) := \min\{j | b_{i,j}(S/I) \ne 0\}.$ 

\vspace{.2cm}

Our second conjecture is:

\begin{conjecture}\label{regconj} Let $I$ be a licci homogeneous
ideal of codimension $c$ in a polynomial ring $S$ over a field.
Then $$\mu(I)\leq T_c(S/I).$$\end{conjecture}

This second conjecture is more general than the first conjecture. The
reason is that Taylor's resolution shows that for a square-free
monomial ideal, the last twists in the resolution are bounded above
by the number of variables.  Hence for such ideals, $T_c\leq \dim(S)$.
It is then immediate that Conjecture \ref{regconj} implies Conjecture \ref{monconj}.

An equivalent form of this conjecture states that the deviation of $I$, namely $d(I):= \mu(I)-c,$ is bounded above by $T_c(S/I) -c = \reg(S/I)$. After cutting down by linear forms which form a regular sequence on $S/I$, we can reach the case in which the ideal $I$ is
$\m$-primary, where $\m$ is the homogeneous maximal ideal. This reduction does not change the resolution degrees or the number of generators of $I$. The regularity of $S/I$ is then the maximal
socle degree, and Conjecture \ref{regconj} can be rephrased to
say that $\m^{d(I)}\not\subset I$. This statement does not depend upon
the grading, and leads to our final conjecture:

\begin{conjecture}\label{localconj} Let $(S,\m)$ be a regular local ring, and let $I$ be an $\m$-primary licci ideal. Then
$$\m^{d(I)}\not\subset I.$$ \end{conjecture}

The discussion above shows that Conjecture \ref{localconj} implies
Conjecture \ref{regconj}. 

In this paper we will prove all of these conjectures (where they make sense) in many cases, including codimension two Cohen-Macaulay ideals (Theorem \ref{codimtwo}),
codimension three Gorenstein ideals (Theorem \ref{codimthree}), licci ideals with nearly pure resolutions (Theorem \ref{nearlypure}), licci Gorenstein monomial ideals that are equigenerated (Corollary \ref{Monomial}), and licci ideals containing a maximal regular sequence of quadrics (Theorem \ref{quadrics}).  Another section shows that the veracity of the conjecture in the Gorenstein case
already provides strong evidence for the general case.  We use a variety of techniques, including Boij-S\"oderberg theory, facts about Golod rings, the regularity of Tor, basic facts about the resolution of the link of a given ideal, and a numerical restriction on the regularity of homogeneous licci ideals found in \cite{HU87}. In Theorems \ref{generalcase}, \ref{nearlypure}, \ref{Gorenstein},  \ref{Monomial} and Corollary \ref{InitialMonomial} we actually prove the inequality $$\mu(I)\le t_1(S/I)(c-1)+1.$$ This bound is stronger than the one in
Conjecture \ref{regconj} because $t_1(S/I)(c-1)+1 \leq T_c(S/I)$ according to
\cite{HU87}. The bound was first suggested in \cite{H84}, where Huneke asked whether every equigenerated Gorenstein licci ideal satisfies it. It fails in general however, as shown by an example of Terai (see Example 13 in Table 1).

In a previous paper, we have also shown Conjecture \ref{localconj} is true if $I$
is either an $\m$-primary monomial ideal or contains the cube of the maximal ideal \cite{HPU26-3}. In addition the papers
\cites{KTY2013, ERT2020, RT2023, RTY2020, RT19} give a classification of licci ideals for various square-free monomials ideals and some binomial edge ideals. We verified that all these ideals satisfy our conjectures, and using AI prepared a table with the relevant data, rechecking the conclusions.
For the convenience of the reader we provide these tables at the end of this paper giving the examples.

Although this paper concentrates on these specific conjectures, we feel the conjectures reflect that there are deeper and as yet undiscovered
restrictions on the Betti tables of licci homogeneous ideals. There seems to be an interesting interplay between the number and degrees of the generators of such ideals and the last total Betti number (the type) and the sizes of the last twists.
There may well be further dualities between the Betti numbers and the twists appearing in the resolutions of licci ideals.
This paper studies the main conjectures not only to prove them in as many cases as possible, but also as a way to find
further interesting restrictions on these Betti tables.

\vspace{.2cm}

\section{Low Codimension}

In this section we prove the codimension 2 and codimension 3 Gorenstein cases of Conjecture \ref{localconj}.  

\begin{theorem}\label{codimtwo}  Let $(S,\m)$ be a regular local ring of dimension two, and let $I$ be an $\m$-primary ideal of deviation $d = d(I) = \mu(I)-2$. Then $\m^{d}\not\subset I$.
\end{theorem}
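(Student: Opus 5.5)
The plan is to use the Hilbert--Burch structure of $I$ together with the order of $I$. Set $n=\mu(I)=d+2$. Since $S$ is a two-dimensional regular local ring and $I$ is $\m$-primary, $S/I$ is Cohen--Macaulay of codimension two. Hence it has a minimal free resolution
\[
0\to S^{n-1}\xrightarrow{\ \varphi\ } S^{n}\to S\to S/I\to 0,
\]
and by Hilbert--Burch $I=I_{n-1}(\varphi)$ is generated by the maximal minors of $\varphi$. Minimality means all entries of $\varphi$ lie in $\m$. Every $(n-1)\times(n-1)$ minor is a sum of products of $n-1$ elements of $\m$, so
\[
I\subseteq \m^{\,n-1}=\m^{\,d+1}.
\]
In particular the order of $I$ satisfies $\operatorname{ord}(I)\ge n-1=d+1$. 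This is the classical fact that in a two-dimensional regular local ring $\mu(I)\le \operatorname{ord}(I)+1$.

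The conclusion now follows from a strict inclusion. If $\m^{d}\subseteq I$, then $\m^{d}\subseteq I\subseteq \m^{d+1}$, so $\m^{d}=\m^{d+1}$. Nakayama's lemma then gives $\m^{d}=0$. This is impossible: $S$ is a domain of dimension two, so $\m\neq 0$ and $\m^{d}\neq 0$ for every $d\ge 0$. Note that when $d=0$ the statement reads $S\not\subseteq I$, which holds since $I$ is proper.

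The only point needing care is the Hilbert--Burch step. We need a minimal resolution with entries of $\varphi$ in $\m$, and the generators of $I$ must be exactly the maximal minors up to a unit, so that the containment $I\subseteq \m^{n-1}$ is justified. In the local setting this is standard, so I do not expect a real obstacle. The argument uses no licci hypothesis, because every codimension two Cohen--Macaulay ideal is licci in any case.
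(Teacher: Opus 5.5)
Your proof is correct and follows the same route as the paper. Hilbert--Burch gives $I\subseteq\mathfrak{m}^{\mu(I)-1}=\mathfrak{m}^{d+1}$, and hence $\mathfrak{m}^{d}\not\subseteq I$; you additionally spell out the details the paper leaves implicit, namely that the Hilbert--Burch factor is a unit and that $\mathfrak{m}^{d}\neq\mathfrak{m}^{d+1}$ by Nakayama.
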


\begin{proof}  Set $n+1 = \mu(I)$, so that $d = n-1$.  The Hilbert-Burch theorem implies that $I$ is generated by the $n$ by $n$ minors of a minimal presenting matrix, and in particular $I\subset \m^n$. Therefore, $\m^{n-1}\not\subset I$, as
required.  \end{proof}

\begin{theorem}\label{codimthree}  Let $(S,\m)$ be a regular local ring of dimension three containing a field, and let $I$ be an $\m$-primary ideal  of deviation $d = d(I) = \mu(I)-3$ such that $S/I$ is Gorenstein. Then $\m^{d}\not\subset I$.
\end{theorem}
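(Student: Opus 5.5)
Since $S/I$ is Gorenstein of codimension three, we use the Buchsbaum--Eisenbud structure theorem. Thus $\mu(I)=2k+1$ is odd, and $I$ is generated by the $2k\times 2k$ Pfaffians of a $(2k+1)\times(2k+1)$ alternating minimal presentation matrix $\varphi$ whose entries lie in $\m$. Hence $d=\mu(I)-3=2k-2$, and we must show $\m^{2k-2}\not\subset I$.

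The first observation is that each generator of $I$ is a Pfaffian of a $2k\times 2k$ alternating matrix with entries in $\m$. Such a Pfaffian is a sum of products of $k$ entries, so $I\subset \m^{k}$. This alone only gives $\m^{k-1}\not\subset I$, which is weaker than needed once $k\ge 2$; the case $k=1$ (complete intersection, $d=0$) is trivial because $I\neq S$. So we need the sharper socle-degree information.

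Plan: pass to the associated graded ring or, more simply, reduce to the homogeneous situation. Since $S$ contains a field, complete and use Cohen's structure theorem, so $S=K[[x,y,z]]$. Then let $I^*$ be the ideal of initial forms in $\operatorname{gr}_\m(S)=K[x,y,z]$, or compare via the order filtration. The key claim is a Loewy-length bound. I expect this step to be the main obstacle, since the initial ideal need not be Gorenstein and degrees may jump. The claim is that the socle degree $s$ of $S/I$, meaning the largest $s$ with $\m^{s}\not\subset I$, satisfies $s\ge 2k-2$.

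In the graded case this comes from the Buchsbaum--Eisenbud resolution $0\to S(-e)\to \oplus S(-e+a_i)\to \oplus S(-a_i)\to S$. Here the $a_i\ge k$ are the generator degrees, $e=s+3$, and the self-duality of $\varphi$ forces $e=\big(\sum a_i\big)/k$. Hence $e\ge 2k+1$, so $s\ge 2k-2$. This mirrors the inequality $t_1(c-1)+1\le T_c$ from \cite{HU87}, with $t_1\ge k$, $c=3$.

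For the local case I would avoid grading issues with a direct argument. Pick the generator $f=\mathrm{Pf}_{1}(\varphi)$ of order $\ge k$. Using the Gorenstein duality $I:\m$ and the fact that $S/(f,g,h)$ for a suitable link is a complete intersection, choose three Pfaffians $f,g,h\in I$ forming a regular sequence, each of order $\ge k$. Linkage then gives $(f,g,h):I=(f,g,h)+(\Delta)$, where $\Delta$ is a determinant-type element (the Pfaffian formula for the linking element) of order at least $\operatorname{ord}$ of a product of $k-1$ entries plus the other contributions. The socle of $S/(f,g,h)$ has degree $\ge 3k-3$. Since $\m^{j}\subset I$ with $j\le 2k-3$ would force $\Delta\m^{j}\subset (f,g,h)$, this yields $\operatorname{ord}\Delta+j\ge 3k-2$. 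An order count on $\Delta$ (obtaining $\operatorname{ord}\Delta\le k-1$ would be ideal) gives the contradiction. If this bookkeeping fails, I would instead fall back on the graded argument applied after a flat deformation to $\operatorname{gr}$ of the matrix entries' leading forms, keeping the Pfaffian structure.
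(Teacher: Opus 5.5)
Your graded argument is correct. For a homogeneous alternating presentation, $\deg\varphi_{ij}=e-a_i-a_j$, so $a_i=ke-\sum_{j\ne i}a_j$. Hence $ke=\sum_j a_j\ge k(2k+1)$, and the socle degree $e-3$ is at least $2k-2$.

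\textbf{The gap.} The theorem concerns an arbitrary $\m$-primary Gorenstein ideal in a regular local ring, and your proposal never gets from the local case to the graded one.

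\emph{The linkage step.} It rests on the implication $\Delta\m^{j}\subseteq(f,g,h)\Rightarrow\operatorname{ord}\Delta+j\ge 3k-2$. This holds for homogeneous elements of a standard graded Artinian Gorenstein ring, where $0:\m^{j}$ is exactly the part of degree $\ge s+1-j$. It fails for orders in a local Gorenstein ring. In $K[[x,y]]/(x^2-y^3,xy)$ the socle degree is $3$, yet $x\m^{2}=0$ while $\operatorname{ord}x+2=3$. You also give no argument for the needed upper bound on $\operatorname{ord}\Delta$. In the graded case $\deg\Delta=A-e$, so such a bound is essentially a restatement of $e\ge 2k+1$, the very thing you want.

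\emph{The fallback.} It does not work either. The leading forms of the entries of $\varphi$ need not have degrees of the form $e-a_i-a_j$. So the resulting matrix is not homogeneous for any grading, and its Pfaffians need not define a codimension three Gorenstein ideal. The honest degeneration $I\rightsquigarrow I^{*}\subseteq\operatorname{gr}_{\m}(S)$ does preserve the Loewy length and gives $I^{*}\subseteq\m^{k}$. But it generally destroys the Gorenstein property and the Pfaffian structure, so the identity $ke=\sum_j a_j$ is no longer available.

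\textbf{The missing idea.} The paper avoids gradings altogether. Suppose $\m^{2k-2}\subseteq I\subseteq\m^{k}$.
\begin{itemize}
\item For $k\le 2$ this forces $I=S$ or $I=\m^{2}$, and neither is a codimension three Gorenstein ideal.
\item For $k\ge 3$, this squeezing makes $S/I$ a Golod ring by L\"ofwall's criterion \cite{Lof86}*{Corollary 2.5}. A Golod Gorenstein ring $S/I$ with $I\subseteq\m^{2}$ must be a hypersurface \cite{A96}, contradicting that $I$ has codimension three.
\end{itemize}
This argument works verbatim for non-homogeneous ideals in a regular local ring, which is exactly what your approach lacks.
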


\begin{proof} 

We use the structure theorem of Buchsbaum and Eisenbud \cite{BE73}.   It implies that if $\mu(I) = 2n+1$, and thus $d = 2n-2$, then $I\subset \m^n$. Suppose that the theorem is false.  Then $\m^{2n-2}\subset I\subset \m^n$. 
This cannot happen if $n\leq 2$ since $I$ is neither the unit ideal nor $\m^2.$
Hence we may assume that $n>2.$ By \cite{Lof86}*{Corollary 2.5} (see also \cite{RS}*{Proposition 6.3})
the ring $S/I$ is Golod. The only Golod Gorenstein rings $S/I$ with $I\subset \m^2$ are hypersurface rings \cite{A96}, contradicting the fact that $I$ has codimension $3$.  
\end{proof}

\section{Nearly pure ideals and Gorenstein equigenerated ideals}

Let $S$ be a polynomial ring over a field $k$, and let $M$ be a Cohen-Macaulay graded $S$-module  of codimension $c$. 
We write the minimal graded resolution of $M$ in the form

\begin{equation}\label{resolution}
0\to \bigoplus_{j\in\mathbb{Z}}S(-j)^{b_{c,j}}  \to \cdots \to \bigoplus_{j\in\mathbb{Z}}S(-j)^{b_{1,j}}\to \bigoplus_{j\in\mathbb{Z}}S(-j)^{b_{0,j}}\to M\to 0. \end{equation}
The numbers $b_{i,j}=b_{i,j}(M)$ are called the graded Betti numbers of $M$.  One can prove the characterization
\[b_{i,j}=\dim_k\Tor_i^S(M,k)_j.\] 
The graded Betti numbers of any module are recorded into the Betti table, where the entry in row $i$ and column $j$ is $b_{i,i+j}$:
\[\begin{array}{c|cccc}
 & 0 & 1 & 2 & \cdots \\ \hline
0 & b_{0,0} & b_{1,1} & b_{2,2} & \cdots \\
1 & b_{0,1} & b_{1,2} & b_{2,3} & \cdots \\
\vdots & \vdots & \vdots & \vdots & \ddots \\
\end{array}
\]

\vspace{.2cm}

For
$i = 0,...,c$ we define $t_i(M) := \min\{j \, |\  b_{i,j}(M) \ne 0\}$ and $T_i(M) := \max\{j \, |\  b_{i,j}(M) \ne 0\}.$ We say that $M$ is {\it equigenerated} if all the minimal generators have the same degree, i.e., that there is only one $j$ such that $b_{0,j}\ne 0$. 

\smallskip

Boij--S\"oderberg theory gives a complete description of the
\emph{rational cone} generated by Betti tables of graded
Cohen--Macaulay modules of fixed codimension $c$.
The main idea is that every Betti table decomposes uniquely as a
positive rational combination of certain extremal Betti tables
coming from modules with \emph{pure resolutions}.

A sequence $\mathbf{d} = (d_0,...,d_c) \in \mathbb{Z}^{c+1}$ is a degree sequence if $d_i > d_{i-1}$ for all $i > 0$.  To each degree sequence we associate a pure Betti diagram 
$\pi(\mathbf{d})$ that has $b_{0,d_0}(\mathbf{d}) = 1$, and entries for $i\geq 1$
\[b_{i,d_i}(\mathbf{d}) =\prod_{1\leq k\ne i}\frac{d_k}{|(d_k-d_i)|}.\] We refer to this array as a {\it diagram} rather than a {\it table} because in general it will not correspond to the actual Betti table of a Cohen-Macaulay module of codimension $c$.

Given two degree sequences $\mathbf{d}$ and $\mathbf{e}$ in $\mathbb Z^{c+1}$, we say that $\mathbf{d} \geq \bf{e}$ if $d_i \geq e_i$ for $i = 0,...,c$. Each degree sequence $\mathbf{d}$ defines a ray in the cone of Betti diagrams. The diagram  $\pi(\mathbf{d})$ is the unique point on this ray with $b_{0,d_0}(\pi(\mathbf{d}))= 1$. Typically,  $\pi(\mathbf{d})$ will have non-integral entries. Let $I$ be a homogeneous ideal of codimension $c$ such that $S/I$ is Cohen-Macaulay.  We set $\mathbf{d^+} := (0,T_1, . . . , T_c)$ and $\mathbf{d^-} := (0,t_1, . . . , t_c)$.  Boij-S\"oderberg theory \cite{ES09} gives that the Betti diagram of  $S/I$ can be expressed as a positive rational sum of strictly increasing pure diagrams bounded below by  
$\mathbf{d^-}$ and bounded above by $\mathbf{d^+}$.  In particular if $t_i = T_i$, then every $\mathbf{d}$ appearing in the Boij-S\"oderberg decomposition of the Betti table of $S/I$ must have
$i$th component equal to $t_i=T_i$.

Our first theorem shows that a certain technical assumption on the relationship
of $t_i$ to $T_1$ implies the conclusion we seek, that $T_c$ bounds the number of
generators of the ideal $I$.

\begin{theorem}\label{generalcase}  Let $S$ be a polynomial ring over a field, and let $I$ be an equigenerated licci ideal of $S$ of codimension $c$. Let the free resolution of $M=S/I$ be given as in $(\ref{resolution}).$ We assume that $t_k > (k-1)T_1$ for all $2\leq k\leq c$.  Then $b_1:= \sum_{j} b_{1,j}\leq (c-1)t_1+1\leq T_c$. In particular, Conjecture $\ref{regconj}$ holds in this case.
\end{theorem}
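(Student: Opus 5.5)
The plan is to use the Boij--S\"oderberg decomposition of the Betti table of $M=S/I$ and reduce the inequality to an estimate for a single pure diagram, which then telescopes. Set $a:=t_1=T_1$; this is one number because $I$ is equigenerated. By \cite{ES09}, as recalled above, the Betti diagram $\beta(M)$ can be written as
\[
\beta(M)=\sum_{\mathbf d}\lambda_{\mathbf d}\,\pi(\mathbf d),\qquad \lambda_{\mathbf d}\in\mathbb Q_{>0},
\]
where the sum runs over degree sequences with $\mathbf d^-\le \mathbf d\le \mathbf d^+$. Since $t_0=T_0=0$ and $t_1=T_1=a$, every such $\mathbf d$ has the form $(0,a,d_2,\dots,d_c)$, with integers $d_k\ge t_k$ for $k\ge 2$. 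Comparing the $(0,0)$ entries gives $1=b_{0,0}(M)=\sum_{\mathbf d}\lambda_{\mathbf d}$, because each $\pi(\mathbf d)$ has $b_{0,0}=1$. Comparing the first column gives
\[
b_1=\sum_{\mathbf d}\lambda_{\mathbf d}\,b_{1,a}(\mathbf d).
\]
So $b_1$ is a convex combination of the numbers $b_{1,a}(\mathbf d)$, and it suffices to prove $b_{1,a}(\mathbf d)\le (c-1)a+1$ for each $\mathbf d$ that occurs.

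For such a $\mathbf d$, the defining formula (with the $k=0$ factor understood to equal $1$, as is standard for the normalization $d_0=0$) gives
\[
b_{1,a}(\mathbf d)=\prod_{k=2}^{c}\frac{d_k}{d_k-a}.
\]
By hypothesis $d_k\ge t_k>(k-1)a$, and since these are integers, $d_k\ge (k-1)a+1>a$. The function $x\mapsto x/(x-a)$ is decreasing for $x>a$, so
\[
\frac{d_k}{d_k-a}\le \frac{(k-1)a+1}{(k-2)a+1}.
\]
Multiplying over $k=2,\dots,c$, the product telescopes to $((c-1)a+1)/1$. This gives $b_{1,a}(\mathbf d)\le (c-1)t_1+1$, and hence $b_1\le (c-1)t_1+1$ by convexity.

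For the second inequality, $(c-1)t_1+1\le T_c$, I will invoke the numerical restriction on homogeneous licci ideals from \cite{HU87}, quoted in the introduction. This is the only place where the licci hypothesis is used. The final claim then follows because $\mu(I)=b_1$, which is exactly Conjecture~\ref{regconj} for $I$.

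I do not expect a serious obstacle. The step needing the most care is the bookkeeping in the Boij--S\"oderberg decomposition:
\begin{itemize}
\item checking that the coefficients $\lambda_{\mathbf d}$ sum to $1$ under the normalization $b_{0,d_0}=1$;
\item checking that equigeneration forces $d_1=a$ for every pure summand;
\item using integrality of the $d_k$ to pass from the strict inequality $d_k>(k-1)a$ to $d_k\ge (k-1)a+1$, which is what makes the product telescope exactly.
\end{itemize}
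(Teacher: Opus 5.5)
Your proposal is correct and follows the paper's proof essentially step for step: the Boij--S\"oderberg decomposition with coefficients summing to $1$, the reduction to bounding $b_{1,a}(\mathbf d)=\prod_{k=2}^{c} d_k/(d_k-a)$ for each pure summand, the monotonicity of $x/(x-a)$ together with the integrality bound $d_k\ge (k-1)a+1$, the telescoping product, and \cite{HU87} for the inequality $(c-1)t_1+1\le T_c$. Your aside about a ``$k=0$ factor'' is unnecessary, because the paper's product formula already runs over $1\le k\ne i$, but it does not affect the argument.
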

\begin{proof} We first note that the last inequality follows from the fact that $I$ is licci by \cite{HU87}*{Corollary 5.13}. Decompose the Betti table $B$ of $S/I$ via its Boij-S\"oderberg decomposition \cite{ES09}:

\[B = \sum_{i=1}^k \alpha_i\pi(\bf d_i)\]
where ${\bf d_i} = (0,d_{i,1},...,d_{i,c})$, and $\alpha_i$ are all positive rational numbers. 

There are several important points about this decomposition which we need to use.  First of all, we may assume that the
sequence of pure diagrams is strictly increasing, i.e., ${\bf d^{-}}\leq {\bf d_1} < {\bf d_2} < \cdots < {\bf d_k}\leq {\bf d^{+}}.$ Because $S/I$ is cyclic, we must have that the  $\sum_{i=1}^k \alpha_i = 1$, since the zeroth Betti number in the resolution is $1$, and $b_{0,d_{i,0}}({\bf d_i}) = 1$.

Recall that the Betti table of $\pi(\bf d_i)$ has entries,

\[b_{j,d_{i,j}}({\bf d_i}) =\prod_{1\leq k\ne j}\frac{d_{i,k}}{|(d_{i,k}-d_{i,j})|}.\]
We claim that it suffices to prove that for all $1\leq i\leq k$  $$b_{1,d_{i,1}}({\bf d_i})\leq 
d_1(c-1)+1.$$
In that case, 
$$b_1(S/I) = \sum_{i = 1}^k \alpha_i b_{1,d_{i,1}}({\bf d_i})\leq \sum_{i = 1}^k \alpha_i (d_1(c-1)+1) = d_1(c-1)+1,$$
where the last equality holds since $\sum_{i=1}^k \alpha_i = 1$. This is the statement of the theorem.

Now fix one of the pure diagrams $\mathbf{d_i}:= \mathbf{d}$, and for simplicity relabel the degrees for this sequence as $(0,d_1,d_2,d_3,...,d_c)$.   Although this degree sequence may not correspond to an actual module, we know that $\mathbf{d^{-}}\leq \mathbf{d}\leq \mathbf{d^+}$; in particular,
$T_k\geq d_k\geq t_k$ for all $k$. We need to prove that 

\[ \prod_{2\leq k\leq c}\frac{d_{k}}{(d_{k}-d_{1})} = b_{1,d_{1}}({\bf d}) \leq d_1(c-1)+1.\]

The function $f(x) = \frac{x}{(x-d_1)} = 1 + \frac{d_1}{(x-d_1)}$ is  a decreasing function. In particular, 
$$\frac{d_{k}}{(d_{k}-d_{1})}\leq \frac{(k-1)d_1+1}{(k-2)d_{1}+1}$$ 
since $d_k \geq t_k$, and by assumption $t_k > (k-1)T_1\geq (k-1)d_1$ for all $2\leq k\leq c$. Hence
\[b_{1,d_{1}}(\mathbf{d})\leq \frac{((c-1)d_1+1)}{((c-2)d_1+1)}\cdot\frac{((c-2)d_1+1)}{((c-3)d_1+1)}\cdots\frac{(2d_1+1)}{(d_1+1)}\cdot\frac{(d_1+1)}{1} = (c-1)d_1+1.\] 
\end{proof}

We apply this theorem in two cases: when $I$ is a Gorenstein ideal which is equigenerated, and when $S/I$ has a nearly pure resolution.
We say that $S/I$ has a {\it nearly pure} resolution if it has a minimal graded resolution of the form
\begin{equation}\label{pure}
0\to S^{b_c}(-d_c)  \to \cdots \to S^{b_3}(-d_3)\to \oplus_{i =1}^{b_2}S(-d_{2,i}) \to S^{b_1}(-d_1)\to S\to S/I\to 0. 
\end{equation}
In other words, we assume all the shifts in the minimal resolution in every homological degree have only one internal degree, except possibly for the first syzygies of the ideal $I$. We first treat the nearly pure case.

\begin{theorem}\label{nearlypure}  Let $S$ be a polynomial ring over a field, and let $I$ be a licci ideal of $S$ of codimension $c\geq 3$, having a nearly pure resolution as in $(\ref{pure})$.  Then $b_1\leq (c-1)d_1 + 1$. In particular $b_1\leq d_c$.
\end{theorem}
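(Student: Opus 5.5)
The plan is to deduce the statement from Theorem~\ref{generalcase}. First, $I$ is equigenerated: the resolution $(\ref{pure})$ has a single shift $d_1$ in homological degree $1$, so $t_1=T_1=d_1$. The hypothesis of Theorem~\ref{generalcase} then reads $t_k>(k-1)d_1$ for $2\le k\le c$. For $k=2$ it holds automatically, since every second shift $d_{2,i}$ of a minimal resolution is strictly larger than $d_1$. So everything reduces to showing
\[
d_k\ \ge\ (k-1)d_1+1\qquad\text{for } 3\le k\le c,
\]
where, by nearly purity, $t_k=T_k=d_k$ for $k\ge 3$.

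In fact I only need these inequalities for the pure diagrams in the Boij--S\"oderberg decomposition. By nearly purity, every degree sequence ${\bf d}$ appearing there has the form $(0,d_1,e,d_3,\dots,d_c)$ with $e\in\{d_{2,i}\}$, and only the second entry varies. The product estimate in the proof of Theorem~\ref{generalcase} then goes through verbatim once $d_k\ge (k-1)d_1+1$ holds for the fixed degrees $d_3,\dots,d_c$.

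To obtain these inequalities I would use linkage together with \cite{HU87}*{Corollary 5.13}, which gives $T_c\ge (c-1)t_1+1$ for every homogeneous licci ideal. The case $k=c$ is exactly this inequality. For $3\le k<c$ I would proceed as follows. After extending the field, choose a regular sequence $\mathfrak a$ of $c$ general forms of degree $d_1$ inside $I$, and set $J=\mathfrak a:I$, which is again licci. The mapping cone construction gives the resolution of $S/J$ from the dual of $(\ref{pure})$ shifted by $cd_1$, together with the Koszul complex on $\mathfrak a$. The shifts of $S/J$ in homological degree $i$ are among the $cd_1-d_{c-i}$ and $id_1$. Since the $c$ generators of $\mathfrak a$ split off as minimal generators of $J$, this cancellation is controlled. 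I would then apply \cite{HU87}*{Corollary 5.13} to $J$, or more precisely its stronger form bounding how quickly the twists of a licci ideal can grow relative to the initial degree, and translate the result back into lower bounds for the intermediate $d_k$.

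If the linkage route does not give all intermediate inequalities, the fallback is to work directly with the Herzog--K\"uhl equations. The diagram $(0,d_1,e,d_3,\dots,d_c)$ is pure except in position $2$, and the Betti numbers satisfy $\sum_i(-1)^i\sum_j b_{i,j}j^m=0$ for $0\le m<c$. One then shows that $b_1=\sum_\ell\alpha_\ell\prod_{k\ge 2}\frac{d_k}{d_k-d_1}$ is at most $(c-1)d_1+1$ using only $d_c\ge (c-1)d_1+1$ and the monotonicity of $x\mapsto x/(x-d_1)$, telescoping as in Theorem~\ref{generalcase}.

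Once the bound $b_1\le (c-1)d_1+1$ is in hand, the final claim $b_1\le d_c$ follows immediately from \cite{HU87}*{Corollary 5.13}. The main obstacle is establishing the intermediate inequalities $d_k>(k-1)d_1$ for $3\le k<c$, since \cite{HU87} directly controls only the last twist.
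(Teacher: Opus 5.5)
Your reduction to Theorem~\ref{generalcase} is correct, and you correctly identify the crux: the inequalities $d_k>(k-1)d_1$ for $3\le k<c$. Neither route you propose proves them.

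\textbf{The linkage route gives nothing new.} Link by $c$ general forms of degree $d_1$. The mapping cone shows that every last shift of $S/J$ equals $cd_1-d_1$, so $T_c(S/J)=(c-1)d_1$. On the other side, $t_1(S/J)=cd_1-d_c<d_1$. Thus \cite{HU87}*{Corollary 5.13} applied to $J$ reads $(c-1)d_1\ge (c-1)(cd_1-d_c)+1$. This is again just $d_c>(c-1)d_1$, and the degrees $d_3,\dots,d_{c-1}$ never enter. Appealing to an unspecified ``stronger form'' does not fill this in.

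\textbf{The fallback fails.} The Herzog--K\"uhl equations are exactly what define the pure diagrams, so they add nothing to the Boij--S\"oderberg computation. The telescoping product needs every factor $d_k/(d_k-d_1)$ to be controlled, not only the last one. For instance, take $c=4$, $d_1=2$ and ${\bf d}=(0,2,3,4,7)$. Then $d_4=(c-1)d_1+1$, yet $b_{1,2}({\bf d})=\frac{3}{1}\cdot\frac{4}{2}\cdot\frac{7}{5}=\frac{42}{5}>7=(c-1)d_1+1$. A pure resolution is nearly pure, and its Betti table is exactly $\pi({\bf d})$. So no argument using only the bound on $d_c$ and the monotonicity of $x\mapsto x/(x-d_1)$ can succeed.

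\textbf{The missing ingredient} is an upper bound for the last shift in terms of the intermediate ones. The paper uses \cite{EHU06}*{Corollary 4.2}, which gives $T_c\le T_k+(c-k)T_1$ for $2\le k\le c$. Combined with $T_c>(c-1)T_1$ from \cite{HU87}*{Corollary 5.13}, this yields $T_k\ge T_c-(c-k)T_1>(k-1)T_1$. Near purity gives $t_k=T_k=d_k$ for $k\ge 3$, so the hypothesis of Theorem~\ref{generalcase} holds. This is also where near purity is genuinely used: it converts the lower bound on the maximal shift $T_k$ into a lower bound on the minimal shift $t_k$.
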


\begin{proof} Since $T_c = d_c$ and $T_1 = t_1 = d_1$, the theorem follows immediately from Theorem \ref{generalcase} provided we prove that
$$t_k > (k-1)T_1$$ for all $2\leq k\leq c.$

We use two fundamental inequalities concerning these integers.  First, from \cite{HU87}*{Corollary 5.13}, the fact that $I$ is licci implies that
$$t_c = d_c= T_c > (c-1)t_1 = (c-1)T_1,$$ which gives the required inequality for $k = c$.
Secondly, using \cite{EHU06}*{Corollary 4.2} it follows that \begin{equation}\label{EHU} T_c\leq T_k + (c-k)T_1\end{equation} for every $2\leq k\leq c$. Our assumptions imply that $T_k = t_k = d_k$ for every $k\ne 2$, so we may combine the two inequalities to obtain that 
$$t_k = T_k > (k-1)T_1$$ for all $3\leq k\leq c$.

This inequality is satisfied as well for $k = 2$, since $t_2 > t_1 = T_1$.\end{proof}

Another case in which we can verify the hypothesis of Theorem \ref{generalcase} is
given in the following result:



\begin{theorem}\label{Gorenstein} Let $I$ be an equigenerated  licci ideal
of codimension $c$ in a polynomial ring $S$ over a field such that $S/I$ is Gorenstein.  Suppose for all $1\leq p\leq c-1$, $T_p\leq p\cdot T_1$, where we adopt the notation above. Then
$b_1\leq (c-1)t_1+1$. In particular, $b_1\leq T_c$.  \end{theorem}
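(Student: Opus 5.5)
We reduce to Theorem \ref{generalcase}: since $I$ is equigenerated, $t_1=T_1$, so it suffices to show $t_k>(k-1)T_1$ for all $2\le k\le c$. The tools are Gorenstein self-duality of the resolution, the hypothesis $T_p\le pT_1$, and the licci bound $T_c>(c-1)t_1$ of \cite{HU87}*{Corollary 5.13}.

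\emph{Step 1 (duality).} Because $S/I$ is Gorenstein of codimension $c$, its minimal resolution is self-dual. The last module is $S(-T_c)$ with $t_c=T_c$, and $b_{k,j}=b_{c-k,T_c-j}$. Taking the minimum over $j$ gives
\[
t_k = T_c - T_{c-k}\qquad\text{for } 0\le k\le c,
\]
with $T_0=0$.

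\emph{Step 2 (the inequality).} For $2\le k\le c-1$ we have $1\le c-k\le c-1$, so the hypothesis gives $T_{c-k}\le (c-k)T_1$. The licci bound gives $T_c>(c-1)t_1=(c-1)T_1$, or $T_c\ge (c-1)T_1+1$. Combining these,
\[
t_k = T_c-T_{c-k} \ge (c-1)T_1+1-(c-k)T_1=(k-1)T_1+1>(k-1)T_1 .
\]
For $k=c$, $t_c=T_c>(c-1)T_1$ is exactly the licci bound. For $k=2$ the displayed argument already applies when $c\ge 3$. If $c=2$, only $k=c=2$ occurs, which is covered by the licci bound.

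\emph{Step 3 (conclusion).} All hypotheses of Theorem \ref{generalcase} now hold. It yields $b_1\le (c-1)t_1+1\le T_c$.

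The only step needing care is the duality identity $t_k=T_c-T_{c-k}$. This relies on the minimal resolution of a Gorenstein quotient being isomorphic to its dual shifted by $T_c$, which holds because the canonical module is $S/I$ itself, up to shift.
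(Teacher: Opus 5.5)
Your proposal is correct and follows the paper's proof essentially step for step. You use Gorenstein self-duality to get $t_k=T_c-T_{c-k}$, combine the hypothesis $T_{c-k}\le (c-k)T_1$ with the licci bound $T_c>(c-1)T_1$ from \cite{HU87} to obtain $t_k>(k-1)T_1$ for $2\le k\le c$ (handling $k=c$ via $t_c=T_c$), and then invoke Theorem \ref{generalcase}, exactly as the paper does.
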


\begin{proof} We use the same notation as in (\ref{resolution}) with the same
definitions of $T_i$ and $t_i$. It suffices to prove that $t_k > (k-1)T_1$ for all $2\leq k\leq c$;
then Theorem \ref{generalcase} gives the conclusion. Since $S/I$ is
Gorenstein, we know that  $t_c = T_c$,
and $T_c > (c-1)T_1$ since $I$ is licci. Using again that $S/I$ is Gorenstein, 
$$T_{c-k} = T_c - t_{k}$$ for all $0\leq k\leq c.$ Fix an integer $k$, $2\leq k\leq c-1$.  For $1\leq c-k\leq c-1$, we are assuming that $T_{c-k}\leq (c-k)T_1$.  As 
$T_{c-k} = T_c-t_{k}$, the assumed inequality becomes $T_c-t_{k}\leq (c-k)T_1$,
for all $0\leq k\leq c-1$. 
Hence $T_c-(c-k)T_1\leq t_k$. Since $I$ is licci, \cite{HU87}*{Corollary 5.13} proves that $T_c > (c-1)T_1.$ Combining these two inequalities gives 
$(k-1)T_1 < t_k$ for all $0\leq k\leq c-1$. For $k = c$ we know that $t_c = T_c$ since
$S/I$ is Gorenstein, and $T_c > (c-1)T_1$ since $I$ is licci. We may apply Theorem \ref{generalcase} to finish the proof. \end{proof}

As an application of this theorem, we obtain the following two corollaries:

\begin{corollary}\label{Monomial} Let $I$ be a monomial licci equigenerated ideal
of codimension $c$ in a polynomial ring $S$ over a field such that $S/I$ is Gorenstein. Then $b_1\leq (c-1)d_1 + 1$. In particular $b_1\leq d_c$.
 \end{corollary}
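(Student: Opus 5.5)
The plan is to deduce the corollary from Theorem \ref{Gorenstein}. With $d_1 = t_1 = T_1$ the common degree of the generators of $I$, we must verify $T_p \leq p\cdot T_1$ for $1\leq p\leq c-1$. The source of this bound is the Taylor resolution. Let $I$ be minimally generated by monomials $m_1,\dots,m_{b_1}$, all of degree $d_1$. The Taylor complex is a (generally non-minimal) graded free resolution of $S/I$. In homological degree $p$ its basis elements correspond to subsets $F\subseteq\{1,\dots,b_1\}$ with $|F|=p$, and they sit in degree $\deg \operatorname{lcm}(m_i : i\in F)$.

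Since $\operatorname{lcm}(m_i : i\in F)$ divides $\prod_{i\in F} m_i$, its degree is at most $\sum_{i\in F}\deg m_i = p\,d_1$. The minimal resolution is a direct summand of the Taylor resolution, so every twist appearing in homological degree $p$ of the minimal resolution also appears there. Hence $T_p \leq p\,d_1 = p\,T_1$ for every $p\geq 1$, and in particular for $1\leq p\leq c-1$. Equivalently, $\operatorname{Tor}_p^S(S/I,k)$ is a subquotient of the $p$-th Taylor module tensored with $k$, so it vanishes in degrees above $p\,d_1$.

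All hypotheses of Theorem \ref{Gorenstein} now hold: $I$ is equigenerated, licci and Gorenstein, and $T_p\le pT_1$. That theorem gives $b_1 \leq (c-1)t_1+1 = (c-1)d_1+1$. The second claim, $b_1\le d_c$, comes from \cite{HU87}*{Corollary 5.13}, which gives $(c-1)t_1+1\leq T_c$. Because $S/I$ is Gorenstein, the last module is $S(-T_c)$ with $T_c = t_c =: d_c$. So there is no real obstacle here; the only point needing care is the passage from the Taylor complex to the minimal resolution, namely that graded minimalization only removes summands and never increases the top twist.
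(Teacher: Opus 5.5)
Your proposal is correct and follows essentially the same route as the paper: reduce to Theorem \ref{Gorenstein}, then bound $T_p\le pT_1$ via the Taylor resolution, using that the lcm of $p$ generators of degree $T_1$ has degree at most $pT_1$. You also spell out the point the paper leaves implicit, namely that the minimal resolution is a direct summand of the Taylor complex, so its twists in each homological degree occur among the Taylor twists.
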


\begin{proof} By Theorem \ref{Gorenstein} it suffices to prove that $T_p\leq p\cdot T_1$ for all $1\leq p\leq c.$  However, from the Taylor resolution, the twists at the $p^{\rm th}$ step of the multigraded free resolution of $S/I$ all arise from the least common multiple
of $p$ generators of $I$. All generators of $I$ have degree $T_1$ by assumption, so
the degree of the least common multiple of any $p$ of them has degree at most $p\cdot T_1$. \end{proof}

\begin{corollary}\label{InitialMonomial} Let $I$ be a  licci ideal of codimension $c$ generated in one degree $\delta$
in a polynomial ring $S$ over a field such that $S/I$ is Gorenstein. Suppose that there is a term order such that
the initial ideal ${\rm in} (I)$ is generated in degree $\delta$. Then
$b_1\leq (c-1)\delta + 1$. In particular, $b_1\leq T_c$. \end{corollary}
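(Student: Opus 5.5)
Our plan is to reduce Corollary \ref{InitialMonomial} to Theorem \ref{Gorenstein}. We have $T_1(S/I)=t_1(S/I)=\delta$, because $I$ is generated in the single degree $\delta$. So it suffices to check the hypothesis of Theorem \ref{Gorenstein}, namely $T_p(S/I)\leq p\,\delta$ for $1\leq p\leq c-1$. Once this is known, Theorem \ref{Gorenstein} gives $b_1\leq (c-1)\delta+1$. The final bound $b_1\le T_c$ then follows from \cite{HU87}*{Corollary 5.13}, exactly as in the proof of Theorem \ref{generalcase}.

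To bound $T_p(S/I)$ we pass to the initial ideal. Let $J={\rm in}(I)$ with respect to the given term order. The standard upper semicontinuity of graded Betti numbers under Gr\"obner degeneration (there is a flat family with general fiber $S/I$ and special fiber $S/J$) gives
\[ b_{p,j}(S/I)\leq b_{p,j}(S/J)\quad\text{for all } p,j. \]
In particular, if $b_{p,j}(S/I)\neq 0$ then $b_{p,j}(S/J)\neq 0$, so $T_p(S/I)\leq T_p(S/J)$.

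Next we bound $T_p(S/J)$ as in the proof of Corollary \ref{Monomial}. By hypothesis $J$ is a monomial ideal generated in degree $\delta$. The Taylor resolution of $S/J$ has its $p$th free module generated in degrees equal to the degrees of least common multiples of $p$ minimal generators of $J$. Each such degree is at most $p\delta$. Since the minimal resolution is a summand of the Taylor resolution, $T_p(S/J)\leq p\delta$. Combining the two steps gives $T_p(S/I)\leq p\,T_1(S/I)$ for all $p$, which is the hypothesis of Theorem \ref{Gorenstein}.

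The only step that is not purely formal is the semicontinuity inequality $b_{p,j}(S/I)\le b_{p,j}(S/{\rm in}(I))$. I would simply cite it, since it is standard: it follows from the flat degeneration to the initial ideal, or from the consecutive-cancellation description of passing from the resolution of ${\rm in}(I)$ to that of $I$. It is worth noting that neither $J$ being licci nor $S/J$ being Gorenstein is needed. The licci and Gorenstein hypotheses are used only for $I$ itself, inside Theorem \ref{Gorenstein}.
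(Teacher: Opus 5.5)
Your proposal is correct and is essentially the paper's proof. You reduce to Theorem \ref{Gorenstein}, get $T_p(S/I)\le T_p(S/{\rm in}(I))$ from upper semicontinuity of graded Betti numbers under Gr\"obner degeneration (the paper cites \cite{MS}*{Theorem 8.29}), and then get $T_p(S/{\rm in}(I))\le p\delta$ from the Taylor resolution exactly as in Corollary \ref{Monomial}.
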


\begin{proof} By Theorem \ref{Gorenstein} it suffices to prove that $T_p\leq p\cdot T_1$ for all $1\leq p\leq c.$ In general \cite{MS}*{Theorem 8.29},
$T_p(S/I)\leq T_p(S/{\rm in}(I)).$ As observed in the proof of Corollary \ref{Monomial}, $T_p(S/{\rm in}(I))\leq p\cdot\delta = p\cdot T_1$. \end{proof}

\begin{remark} Another class of rings $S/I$ which satisfy the condition that $T_p\leq p\cdot T_1$ is the class of quadratic Koszul algebras; indeed, if $S/I$ is Koszul then $T_i(S/I)\leq 2i$ for all $i$ \cites{ACI10, Conca14}.
The licci quadratic Koszul algebras were recently classified
in \cite{MMM}*{Theorem A} and all of them satisfy our conjectures.  However, in Section \ref{regularquadrics} we prove our conjectures for every ideal containing a maximal regular sequence of quadrics; they do not have to be generated by
quadrics. \end{remark}

\section{Sums of Links}

In this section we explore the implications of assuming the validity of the main conjecture for Gorenstein licci ideals.  It turns out that this assumption has strong consequences for the general case.  

\begin{notation}\label{setup} Throughout this section, we let $S$ be a polynomial ring over an infinite field $k$, and let $I$ be a homogeneous ideal of codimension $c$ that is generically a complete intersection, but which is not itself a complete intersection
. We fix a maximal homogeneous regular sequence $f_1,\ldots,f_c$ in $I$ of degrees $a_1,\ldots,a_c$, and in addition assume that this regular sequence generates $I$ after localizing at every minimal prime
of $I$.  We require that $f_1,...,f_c$ are part of a minimal generating set of $I$. We note that $c$ is strictly less than the dimension of $S$; otherwise the assumptions would imply $I$ is already a complete intersection. Let $J = (f_1,...,f_c):I$.  We set $L = I+J$, which is always Gorenstein of codimension $c+1$ (and if $I$ is licci, so is $L$
by \cite{U90}*{Theorem 2.1}). 

If $H$ is a perfect homogeneous ideal, by $T_i(S/H)$ (respectively $t_i(S/H)$) we denote the maximum (respectively minimum) shift
in the minimal graded free resolution of $S/H$ in homological degree $i$. The invariant we wish to study is given in the next definition:

\begin{definition}\label{difference} For every homogeneous ideal $H$
of codimension $c$ in a polynomial ring $S$ such that $H$ is perfect, we define
$$\Delta(H) = T_c(S/H)-\mu(H),$$
the difference between the largest last shift in the minimal free resolution of $S/H$ and the number of generators of $H$. \end{definition}

The main conjecture of this paper, Conjecture \ref{regconj}, can be restated by saying
$I$ licci implies that $\Delta(I)\geq 0.$ 
What we are able to prove in Theorem \ref{technical} is that
there is a very strong relationship connecting 
$\Delta(I), \Delta(J)$ and $\Delta(L)$. We need to compare these
invariants by considering the free resolutions of the ideals.

We let
\begin{equation}\label{Ires}
\mathbf{F}:\ 0 \longrightarrow \bigoplus_{i=1}^{b_c} S(-d_{ci}) \longrightarrow \cdots
\longrightarrow \bigoplus_{i=1}^{b_1} S(-d_{1i}) \longrightarrow S
\longrightarrow S/I \longrightarrow 0.
\end{equation}
be the minimal graded free resolution of $S/I$.
\end{notation}

\begin{remark} The resolution of $S/(f_1,...,f_c)$ is given by the Koszul complex
and has the form:

\begin{equation}\label{Kres}
\mathbf{K}:\ 0 \longrightarrow S(-a_1-\cdots-a_c) \longrightarrow \cdots \longrightarrow \bigoplus_{1\leq i\leq c}S(-a_i) \longrightarrow S
\longrightarrow S/(f_1,\ldots,f_c)\longrightarrow 0.
\end{equation}
We set $A:= a_1+\cdots + a_c$.

\medskip

A homogeneous $S$-resolution of $S/J$ can be obtained in the
following way: 
let $u:\mathbf{K}\to \mathbf{F}$ be a homogeneous morphism of complexes induced by the inclusion of $(f_1,...,f_c)\subset I$:
\[
\begin{tikzcd}[column sep=large, row sep=large]
0 \arrow[r] &
\displaystyle\bigoplus_i S(-d_{ci}) \arrow[r] & \cdots \arrow[r] &
\displaystyle\bigoplus_i S(-d_{1i}) \arrow[r] & S \\
0 \arrow[r] &
S(-A) \arrow[u,"u_c"] \arrow[r] &
\cdots  \arrow[r] & \bigoplus_i S(-a_i) \arrow[u,"u_1"] \arrow[r] &
S\arrow[u,"u_0"] .
\end{tikzcd}
\]
Then the $S$-dual of the mapping cone of $u$ yields a possibly non-minimal homogeneous
resolution of the link $S/J$ \cite{PS}:
\begin{equation}\label{mappingcone}
0 \longrightarrow \bigoplus_i S(-A+d_{1i}) \longrightarrow \cdots
\longrightarrow \bigoplus_i S(-A+d_{ci}) \oplus \bigoplus_i S(-a_i)
\longrightarrow S \longrightarrow S/J \longrightarrow 0.
\end{equation}
\end{remark}

\begin{remark}\label{mingencanonical}
The link $J$ of $I$ is generated by the entries $I_1(u_c)$ of a matrix representation of $u_c$, together with the regular sequence $f_1,...,f_c$.  This is not necessarily a minimal generating set as there can be cancellation of
some of the $f_i$ due to splitting of $u_{c-1}$.  We will make this fact precise below.  However, since $I$ is not a complete intersection, the map $u_c$ cannot have a unit, else $J = S,$ forcing $I$ to be a complete intersection. Moreover, the canonical module of $S/I$ is isomorphic to $J/(f_1,...,f_c) = (I_1(u_c) + (f_1,...,f_c))/(f_1,...,f_c)$. As the canonical module has exactly the number of generators as the rank of the
last free module in the resolution of $S/I$, it follows that the elements in $I_1(u_c)$ are independent generators, even modulo the regular sequence of $f_i$.

\end{remark}

\begin{theorem}\label{technical}  We adopt the notation from Notation \ref{setup}. Then, 
$$\Delta(I)+\Delta(J)-\Delta(L) = T_c(S/I) - t_1(S/I) -c.$$
If  $\Delta(L)\geq 0$, then either $\Delta(I)\geq 0$ or $\Delta(J)\geq 0$.
\end{theorem}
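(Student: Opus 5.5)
The plan is to compute each of the three invariants $\Delta(I)$, $\Delta(J)$, $\Delta(L)$ directly from the resolutions in the Remark. I would express $T_c(S/J)$, $\mu(J)$, $T_{c+1}(S/L)$ and $\mu(L)$ in terms of $A$, the shifts $d_{1i}$ and the Betti numbers of $S/I$, and then take the alternating sum. The second statement then follows by a short numerical argument.

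\emph{Step 1: the link $J$.} Since $f_1,\ldots,f_c$ are part of a minimal generating set of $I$, the map $u_1$ can be chosen to send the basis of $\bigoplus S(-a_i)$ onto the corresponding basis elements of $F_1$. After dualizing the mapping cone (\ref{mappingcone}), the summands $S(-A+a_i)$ in the last module cancel against the dual Koszul summands. The copy of $S(-A)$ coming from $u_0$ cancels as well. What survives at the end is $\bigoplus S(-A+d_{1i})$, with $i$ running over the generators of $I$ other than the $f_j$. Hence $T_c(S/J)=A-t_1'$, where $t_1'$ is the least degree of a minimal generator of $I$ not among the $f_j$. I must check that $t_1'=t_1(S/I)$; if not, I would use the freedom in the choice of the regular sequence, or absorb the difference in the bookkeeping of Step 3. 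By Remark \ref{mingencanonical}, $J$ is generated by the $b_c(S/I)$ entries of $I_1(u_c)$ together with the $f_j$ that do not cancel.

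\emph{Step 2: the sum $L$.} Since $(f)$ generates $I$ at its minimal primes, $I\cap J=(f_1,\ldots,f_c)$. This gives the exact sequence $0\to S/(f)\to S/I\oplus S/J\to S/L\to 0$. The mapping cone of this sequence, the Koszul complex $\mathbf K$, and Gorenstein duality for $L$ should yield $T_{c+1}(S/L)=A$. From $L=I+I_1(u_c)$ and Remark \ref{mingencanonical} (the entries of $I_1(u_c)$ are independent modulo $I$, since they minimally generate the canonical module $J/(f)$), I expect $\mu(L)=\mu(I)+b_c(S/I)$.

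\emph{Step 3: the identity.} I then need $\mu(L)=\mu(I)+\mu(J)-c$. This is the main obstacle: I must show that the cancellations among the $f_j$ in the generating set of $J$ are exactly compensated. Concretely, an $f_j$ becomes redundant in $J$ precisely when the corresponding element is redundant in $L$. I would track this through the splitting of $u_{c-1}$ in the dual cone, identifying which $f_j$ drop out of each generating set. Granting this, the sum is
\[
\Delta(I)+\Delta(J)-\Delta(L)=T_c(S/I)-\mu(I)+(A-t_1)-\mu(J)-A+\mu(L),
\]
which equals $T_c(S/I)-t_1(S/I)-c$.

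\emph{Step 4: the second statement.} Suppose $\Delta(L)\ge 0$ but $\Delta(I),\Delta(J)\le -1$. By the identity, $T_c(S/I)-t_1(S/I)-c=\Delta(I)+\Delta(J)-\Delta(L)\le -2$. However, minimal shifts strictly increase along a minimal resolution, so $t_1<t_2<\cdots<t_c\le T_c$. This gives $T_c(S/I)\ge t_1(S/I)+c-1$, that is, $T_c(S/I)-t_1(S/I)-c\ge -1$, a contradiction.
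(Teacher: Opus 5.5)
Your Step 1 is more careful than the paper, and the doubt you raise there is a genuine gap that cannot be closed: the identity as stated, with $t_1(S/I)$, is false.

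\textbf{The failing step.} Because the $f_j$ are minimal generators of $I$, $u_1$ is a split injection. So the $c$ summands $S(-A+a_j)$ at the end of (\ref{mappingcone}) cancel, and $T_c(S/J)=A-t_1'$ with $t_1'=t_1(I/(f_1,\ldots,f_c))$. Equivalently, $\omega_{S/J}\cong (I/(f_1,\ldots,f_c))(A-\dim S)$. The paper asserts $T_c(S/J)=A-t_1(S/I)$ and justifies it by the entries of $u_c$ lying in $\m$; but that controls the other end of the cone. Neither of your repairs is available. The sequence $f_1,\ldots,f_c$ is fixed in Notation \ref{setup}, and every other quantity is computed exactly, so nothing can absorb $t_1'-t_1$.

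\textbf{A counterexample.} Let $I\subset S=k[x,y,z]$ be the ideal of five general points of $\mathbb{P}^2$, with resolution $0\to S(-4)^2\to S(-2)\oplus S(-3)^2\to S$. Take $f_1=q$ the conic and $f_2=g$ a general cubic in $I$, so $A=5$. The sixth point of $V(q,g)$ is then new, so $(q,g)$ generates $I$ generically. Then $J=(\ell_1,\ell_2)$ is the ideal of that sixth point. Also $L=(\ell_1,\ell_2,g')$ is a complete intersection of type $(1,1,3)$, where $g'$ is the remaining cubic generator. Hence $\Delta(I)=1$, $\Delta(J)=0$, $\Delta(L)=2$. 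The left side equals $-1$, while $T_2(S/I)-t_1(S/I)-2=0$.

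\textbf{What is true.} The correct identity is $\Delta(I)+\Delta(J)-\Delta(L)=T_c(S/I)-t_1'-c$. It agrees with the statement whenever $t_1'=t_1(S/I)$, for example when $I$ is equigenerated. The second assertion survives. Indeed $t_1'\le T_1(S/I)$, and the maximal shifts of a perfect module strictly increase (dualize its minimal resolution). So $T_c(S/I)-t_1'-c\ge -1$, and your Step 4 goes through.

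\textbf{Steps 2 and 3.} In Step 2, $\mu(L)=\mu(I)+b_c(S/I)$ is wrong in general. The entries of $u_c$ are independent modulo $(f_1,\ldots,f_c)$, but some $f_j$ can become redundant in $L$. In the example $q,g\in\m J$, and $\mu(L)=3\neq 5$. The correct count is $\mu(L)=b_1+b_c-s$ with $s=\dim_k(u_{c-1}\otimes k)$. This also contradicts the relation you want in Step 3 unless $s=0$. That relation, which you call the main obstacle, needs no tracking of splittings. Tensor $0\to I\cap J\to I\oplus J\to L\to 0$ with $k$, and use $I\cap J=(f_1,\ldots,f_c)$ with the $f_j$ part of a minimal generating set of $I$. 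This gives $\mu(L)=\mu(I)+\mu(J)-c$ at once, as in the paper.

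\textbf{The socle degree of $L$.} For $T_{c+1}(S/L)=A$ the paper's argument is also short. The Tor sequence of (\ref{ses}) embeds $\Tor_{c+1}(S/L,k)$ into the one-dimensional $\Tor_c(S/(f_1,\ldots,f_c),k)$, since $\operatorname{pd}(S/I\oplus S/J)=c$.
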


\begin{proof} We  need to compute the various invariants for $I,J$ and $L$.
There is an exact sequence
\begin{equation}\label{ses} 0\to S/(f_1,...,f_c)\to S/I\oplus S/J\to S/L\to 0.\end{equation}

First, consider $L$. We find the last twist of the resolution of $S/L$ by computing
$\Tor_{c+1}(S/L,k)$. The long exact sequence coming from (\ref{ses}) shows that this Tor embeds in $\Tor_c(S/(f_1,...,f_c),k)$. Since both
are one-dimensional over $k$ (both are Gorenstein), this map is an isomorphism, and proves that
the last twist in the resolution of $S/L$ is exactly $A$, i.e. $$T_{c+1}(S/L) = A.$$

Define an integer $s$ as follows: $s = \dim_k(u_{c-1}\otimes_Sk)$. In other words, $s$ is exactly the number of Koszul relations on the regular sequence which
split away from the resolution of $S/I$ at the $(c-1)^{\rm st}$ comparison map.
For the number of generators of $L$ we use the following lemma:

\begin{lemma} Let the notation be as in Notation \ref{setup}. Then $\mu(L) = b_1+b_c-s.$ \end{lemma}

\begin{proof} We use the short exact sequence,
$$0 \longrightarrow I\cap J\longrightarrow I\oplus J\longrightarrow L\longrightarrow 0.$$
After tensoring with the residue field, we see that $\mu(L) = \mu(I) + \mu(J) - p,$ where $p$ is the
dimension of the image of $(I\cap J)/\m(I\cap J)$ in $I/\m I\oplus J/\m J$.  Since $I\cap J = (f_1,...,f_c)$ and the regular sequence $f_1,...,f_c$ is assumed to be part of a minimal generating set of $I$, the dimension of this image is exactly $c$. Consider $\mu(J)$. From the
resolution of $S/J$ as in (\ref{mappingcone}) one sees that $\mu(J) = b_c+c-s.$ This follows as the minimal generators of $J$ come from the entries of a matrix representation of $u_c$, together with elements of the regular sequence in the link which are not canceled by splitting of $u_{c-1}$. (see \ref{mingencanonical}). Hence
$$\mu(L) = \mu(I)+ \mu(J) - c = b_1+ (b_c+c-s) -c = b_1+b_c-s.$$\end{proof}

Finally, from the mapping cone (\ref{mappingcone}) using that the image of $u_c$ lies in the maximal ideal times the free module, one obtains that $$T_c(S/J) = A - t_1(S/I).$$

We summarize what we have shown regarding $\Delta$:
$$\Delta(I) = T_c(S/I)-b_1.$$
$$\Delta(J) = (A - t_1(S/I)) -(b_c+c-s) = A - t_1(S/I) -b_c - c +s.$$
$$\Delta(L) = A - (b_1+b_c-s) = A - b_1 -b_c +s.$$

It follows immediately that $$\Delta(I)+\Delta(J)-\Delta(L) = T_c(S/I)-t_1(S/I)-c,$$ proving the first statement of the theorem.
Now assume that $\Delta(L)\geq 0.$ Note that $t_1(S/I) + c- 1 \leq T_c(S/I)$ since the matrices in the
minimal resolution of $S/I$ have to have at least degree one entries at every
step in the resolution. It follows that
$$\Delta(I)+\Delta(J)\geq -1. $$  
It is now clear that if either of $\Delta(I)$ or $\Delta(J)$ is negative, the other must be nonnegative.   \end{proof}

\begin{remark} In this section we have not yet assumed that
$I$ is licci. If $I$ is licci then so are $J$ and $L$, as mentioned
above. Our conclusion shows that if Conjecture \ref{regconj} holds for
the Gorenstein licci ideal $L$, then it must hold for either $I$ or
$J$.  In some sense the truth of the Gorenstein case of the conjecture implies that at least half of all licci ideals satisfy the conjecture
as well. In fact, when $I$ is licci, we can say more, as in the next theorem.
\end{remark}

We let $d(I) = \mu(I)-c$ be the deviation of $I$ in the statement of the next theorem.

\begin{theorem}\label{extremaldeviation} Let the notation be as in Notation \ref{setup}, and assume that $I$ is licci. In addition
assume that $\Delta(L)\geq 0$, i.e., that Conjecture \ref{regconj} holds for the Gorenstein licci ideal $L$. Then
$$A\geq b_1(S/I) + b_c(S/I) -s.$$ If
$$t_1(S/I)\leq d(I),$$
then $\Delta(J)\geq 0$. If $$d(I)\leq (c-1)(t_1(S/I)-1),$$
then $\Delta(I)\geq 0$. In particular, if 
$$t_1(S/I)\leq d(I) \leq (c-1)(t_1(S/I)-1),$$ then Conjecture \ref{regconj} holds for both $I$ and $J$ as well.
\end{theorem}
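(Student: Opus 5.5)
The plan is to reuse the explicit formulas for $\Delta(I)$, $\Delta(J)$ and $\Delta(L)$ computed in the proof of Theorem \ref{technical}, and to combine them with the licci bound \cite{HU87}*{Corollary 5.13}. Those formulas are
$$\Delta(I) = T_c(S/I)-b_1,\qquad \Delta(J) = A - t_1(S/I) - b_c - c + s,\qquad \Delta(L) = A - b_1 - b_c + s.$$
Here $b_1=\mu(I)$, $b_c$ is the last Betti number of $S/I$, and $s=\dim_k(u_{c-1}\otimes_S k)$.

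\emph{First inequality.} Since $I$ is licci, $L$ is a Gorenstein licci ideal by \cite{U90}*{Theorem 2.1}, so the hypothesis $\Delta(L)\geq 0$ makes sense. It reads $A - b_1 - b_c + s\geq 0$, which is exactly $A\geq b_1(S/I)+b_c(S/I)-s$. No further work is needed here.

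\emph{The claim $\Delta(J)\geq 0$.} Substituting the first inequality into the formula for $\Delta(J)$ gives
$$\Delta(J) = (A - b_c + s) - t_1(S/I) - c \geq b_1 - c - t_1(S/I) = d(I) - t_1(S/I).$$
This is nonnegative precisely under the assumption $t_1(S/I)\leq d(I)$.

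\emph{The claim $\Delta(I)\geq 0$.} This step does not use $\Delta(L)$ at all; it uses only that $I$ is licci. By \cite{HU87}*{Corollary 5.13} we have $T_c(S/I)\geq (c-1)t_1(S/I)+1$. If $d(I)\leq (c-1)(t_1(S/I)-1)$, then
$$b_1 = d(I)+c\leq (c-1)t_1(S/I)-(c-1)+c=(c-1)t_1(S/I)+1\leq T_c(S/I),$$
so $\Delta(I)\geq 0$.

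\emph{Conclusion.} When both hypotheses hold, we get $\Delta(I)\geq 0$ and $\Delta(J)\geq 0$ simultaneously. These are exactly Conjecture \ref{regconj} for $I$ and for $J$; the latter is licci, being linked to $I$.

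There is no real obstacle in this argument. The only points to check are that the formulas for $\mu(J)$ and $T_c(S/J)$ from the proof of Theorem \ref{technical} hold without extra hypotheses (they use only Notation \ref{setup}), and that the \cite{HU87} bound is applied in the form with strict inequality $T_c>(c-1)t_1$, i.e.\ $T_c\geq (c-1)t_1+1$.
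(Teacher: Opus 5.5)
Your argument follows the paper's proof step for step, and your first and third claims are fine. The gap is the point you flagged as needing a check. The formula $T_c(S/J)=A-t_1(S/I)$ from the proof of Theorem~\ref{technical}, equivalently $\Delta(J)=A-t_1(S/I)-b_c-c+s$, does not follow from Notation~\ref{setup}. So your derivation of $\Delta(J)\ge 0$ is not justified, and the paper's proof has the same gap.

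\textbf{Why the formula fails.} In (\ref{mappingcone}) the last map is $F_1^*(-A)\to F_2^*(-A)\oplus K_1^*(-A)$, with components $-\partial_2^*$ and $u_1^*$. Because $f_1,\dots,f_c$ are part of a minimal generating set of $I$, $u_1\otimes_S k$ is injective. Hence $\Tor_c^S(S/J,k)$ is the kernel of $u_1^*\otimes k$ on $F_1^*(-A)\otimes k$. The $c$ summands $S(-A+a_i)$ cancel against $K_1^*(-A)$; this is why $b_c(S/J)=d(I)$, as used in Proposition~\ref{ACI}. Consequently $T_c(S/J)=A-t'$, where $t'$ is the least $j$ with $b_{1,j}(S/I)>\#\{i : a_i=j\}$. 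In words, $t'$ is the least degree of a minimal generator of $I$ not accounted for by the linking sequence. The remark in the paper that $u_c$ has entries in $\m$ concerns the other end of the complex. Since $t'\ge t_1(S/I)$, you only get $T_c(S/J)\le A-t_1(S/I)$. That is the wrong direction for a lower bound on $\Delta(J)$.

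\textbf{A concrete example.} Let $I$ be the ideal of $5$ general points in $\mathbb{P}^2$, with resolution $0\to S(-4)^2\to S(-2)\oplus S(-3)^2\to S$. Link by the conic $f_1$ and a general cubic $f_2\in I$, so all requirements of Notation~\ref{setup} hold. Then $A=5$ and $s=2$. The link $J$ is the ideal of the sixth point, and $L$ is a complete intersection of type $(1,1,3)$. Thus $T_2(S/J)=2\neq 3=A-t_1(S/I)$. Moreover $\Delta(I)+\Delta(J)-\Delta(L)=1+0-2=-1$, while $T_c(S/I)-t_1(S/I)-c=0$. Here $d(I)<t_1(S/I)$, so this refutes the formula, not the statement.

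\textbf{What the argument actually proves.} The correct identity is $\Delta(J)=\Delta(L)+d(I)-t'$. So your argument gives $\Delta(J)\ge 0$ under the hypothesis $t'\le d(I)$. This holds, for instance, when some minimal generator of degree $t_1(S/I)$ lies outside $f_1,\dots,f_c$, which is automatic if $I$ is equigenerated. It does not give $\Delta(J)\ge 0$ under $t_1(S/I)\le d(I)$ alone. Closing that case would require a new idea, for example a bound like $\Delta(L)\ge t'-t_1(S/I)$.
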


\begin{proof}  The first statement is a restatement of what we proved in Theorem \ref{technical}:
$$\Delta(L) = A - (b_1+b_c-s) = A - b_1 -b_c +s.$$ 

To prove the second statement, assume that $t_1(S/I)\leq d(I)$. We calculate that
$$\Delta(J)-\Delta(L) = [A - t_1(S/I) -b_c - c +s] - [A - b_1 -b_c +s] = b_1-t_1(S/I)-c.$$ Thus $$\Delta(J) = \Delta(L) + d(I) - t_1(S/I)\geq d(I)-t_1(S/I).$$ Our assumption then proves that $\Delta(J)\geq 0$.

To prove the third statement, assume that
$$d(I)\leq (c-1)(t_1(S/I)-1).$$ In this case, $b_1 = \mu(I)\leq (c-1)t_1(S/I) + 1\leq T_c(S/I)$, since $I$
is licci. Thus $\Delta(I)\geq 0$.   \end{proof}

\begin{remark} It might seem that the roles of $I$ and $J$ are completely symmetric, but for our
calculations, we have used that the linking regular sequence is part of a minimal generating sequence of $I$.  In general
it will not be part of a minimal generating sequence of $J$ as well. One must be careful in trying to exchange the roles
of the two ideals.
\end{remark}


\section{Equigenerated ideals and ideals containing a regular sequence of quadrics}\label{regularquadrics}

\begin{proposition}\label{ACI}
Let $S=k[x_1,\ldots, x_n]$ be a polynomial ring over a field $k$ and 
$I$ be
an $\m$-primary ideal generated by forms of degree $\delta\ge 3$ that is not a complete intersection.  Write $s$ for the highest socle degree of $S/I.$ If $s\ge (\delta-1)n-1,$ then $d(I)\le n<s.$ Furthermore if $d(I)\ge n-2,$ then $I$ is licci. 
\end{proposition}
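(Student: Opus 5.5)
The plan is to link $I$ to an ideal $J$ and move every question about $I$ to $J$. Since $k$ may be assumed infinite (extending the field changes neither the Betti numbers nor the licci property), and $I$ is $\m$-primary and generated in degree $\delta$, I choose $n$ general $k$-linear combinations of the generators. These form a regular sequence $\mathfrak a=(f_1,\ldots,f_n)$ of forms of degree $\delta$, part of a minimal generating set of $I$. The ring $S/\mathfrak a$ is Gorenstein with socle degree $N:=n(\delta-1)$.

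Because $I\neq\mathfrak a$, the nonzero ideal $I/\mathfrak a$ contains the socle of $S/\mathfrak a$, so $(S/I)_N=0$ and $s\le N-1$. The hypothesis $s\ge N-1$ therefore forces $s=N-1$.

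Set $J=\mathfrak a:I$. By linkage $J/\mathfrak a\cong \omega_{S/I}(-N)$, and minimal generators of $J/\mathfrak a$ in degree $N-j$ correspond to socle elements of $S/I$ in degree $j$. Since $s=N-1$, the ideal $J$ contains $r\ge 1$ linearly independent linear forms $\ell_1,\ldots,\ell_r$, with $r=\dim_k\operatorname{Soc}(S/I)_{N-1}$.

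Next I compute $d(I)$ through $J$. Linkage gives the Hilbert function identity $H_{S/J}(i)=H_{S/\mathfrak a}(N-i)-H_{S/I}(N-i)$. Taking $i=N-\delta$ yields
\[
H_{S/J}(N-\delta)=\dim_k (I/\mathfrak a)_\delta=\mu(I)-n=d(I).
\]
By the mapping cone (\ref{mappingcone}), $T_n(S/J)=A-t_1(S/I)=n\delta-\delta$, so the socle degree of $S/J$ is exactly $N-\delta$. Thus $d(I)$ is the dimension of the top graded piece of $S/J$. Now $S/J$ is a quotient of $S'/\mathfrak a'$, where $S'=S/(\ell_1,\ldots,\ell_r)$ is a polynomial ring in $n-r$ variables and $\mathfrak a'$ is the image of $\mathfrak a$, generated by forms of degree $\delta\ge 3$.

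I then bound this top piece to get $d(I)\le n$. The idea is to show that the last graded piece of such a quotient has small dimension, using that $S/J$ is Gorenstein-linked back to $I$, so that its socle in degree $N-\delta$ comes only from the degree-$\delta$ generators of $I$. I would compare with $\mathfrak a:\ell_1\supseteq I$, where $\ell_1$ is one of the linear forms: this gives $d(I)\le \dim_k(\mathfrak a:\ell_1)_\delta-n=H_{S/(\mathfrak a,\ell_1)}(N-\delta)$. I would try to bound the latter by $n$ using the top degree of $S/(\mathfrak a,\ell_1)$, which sits near $N-\delta$ since $(\mathfrak a,\ell_1)$ has $n$ generators of degree $\delta$ in $n-1$ variables.

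Granting $d(I)\le n$, the inequality $n<s$ is immediate, because $\delta\ge 3$ gives $s=n(\delta-1)-1\ge 2n-1>n$.

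For the licci statement, the key is to convert $d(I)\ge n-2$ into the assertion that $J$ contains at least $n-2$ independent linear forms, that is, $r\ge n-2$. The natural route is to run the Hilbert function comparison above in reverse: a large top piece of $S/J$ forces few variables to remain in $S'$. Once $r\ge n-2$, the forms $\ell_1,\ldots,\ell_r$ are a regular sequence on $S/J$, and $J/(\ell)$ is an $\m$-primary ideal in a polynomial ring in at most $2$ variables. It is therefore a complete intersection or a codimension-two perfect ideal, hence licci by Hilbert--Burch (Theorem \ref{codimtwo}'s setting). Lifting along the linear forms, $J$ is licci, and since $I$ is linked to $J$, so is $I$.

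The main obstacle is the quantitative step: bounding $H_{S/J}(N-\delta)$ by $n$, and extracting $r\ge n-2$ from $d(I)\ge n-2$. This is where $\delta\ge3$ and the exact value $s=N-1$ must really be used, and it is the part of the plan I am least sure of.
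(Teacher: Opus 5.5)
\textbf{There is a genuine gap in the licci half.} Your reductions ($s=N-1$, $r\ge1$, $d(I)=H_{S/J}(N-\delta)$ with $N-\delta$ the socle degree of $S/J$) are correct. The bound $d(I)\le n$ can also be finished along your lines, though you left it open. Modulo $\ell_1$, the image of $\mathfrak a$ contains a regular sequence of $n-1$ forms of degree $\delta$; lift them to $g_1,\ldots,g_{n-1}\in\mathfrak a$ and set $\underline\beta=(\ell_1,g_1,\ldots,g_{n-1})$. Then $S/J$ is a quotient of $S/(\underline\beta)$, whose socle degree is $(n-1)(\delta-1)=N-\delta+1$. By symmetry of its Hilbert function, $d(I)\le H_{S/(\underline\beta)}(N-\delta)=H_{S/(\underline\beta)}(1)=n-1$.

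The licci half, however, rests on a claim that is false. The same comparison shows that $r=1$ always. Indeed, $S/J$ is a quotient of a complete intersection of $r$ linear forms and $n-r$ forms of degree $\delta$, whose top degree is $(n-r)(\delta-1)$, while $(S/J)_{N-\delta}\neq0$. So $n(\delta-1)-\delta\le(n-r)(\delta-1)$, i.e.\ $r(\delta-1)\le\delta$, which forces $r=1$ when $\delta\ge3$.

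For a concrete case, take $n=4$, $\delta=3$ and $I=(x_1^3,x_2^3,x_3^3,x_4^3,x_1x_4^2,x_2x_4^2,x_3x_4^2)$. Then $s=7$ and $d(I)=3\ge n-2$, but $\operatorname{Soc}(S/I)_7$ is spanned by $x_1^2x_2^2x_3^2x_4$, so $r=1$. With $\mathfrak a=(x_1^3,\ldots,x_4^3)$ one gets $J=(x_4,x_1^3,x_2^3,x_3^3,x_1^2x_2^2x_3^2)$. So no argument can produce $n-2$ linear forms in $J$, and your reduction of $J$ to two variables is unavailable for $n\ge4$. (Also, linear forms in $J$ vanish on $S/J$; they are not a regular sequence on it.)

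\textbf{The missing idea is a second link.} Set $K=(\underline\beta):J$. Since the socle degree of $S/(\underline\beta)$ is $N-\delta+1$, Gorenstein duality gives $\dim_k(K/(\underline\beta))_1=H_{S/J}(N-\delta)=d(I)$. Equivalently, in the mapping cone the last comparison map $S(-\delta n+\delta-1)\to S(-\delta n+\delta)^{d(I)}$ has linear entries. These entries are minimal generators of $K/(\underline\beta)\cong\omega_{S/J}$ up to shift.

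Thus $K$ contains $d(I)$ independent linear forms, which gives $d(I)\le n$. When $d(I)\ge n-2$, one can write $K=(x_1,\ldots,x_{n-2})+K'S$ with $K'\subset k[x_{n-1},x_n]$, which is licci; hence $K$, $J$ and $I$ are licci. In the example above, $(x_4,x_1^3,x_2^3,x_3^3):J=\m$.

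This is exactly the paper's proof. It obtains $r=1$ from the degree of the last comparison map, and the linear forms needed to drop to two variables live in the double link $K$, not in $J$.
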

\begin{proof}  The ideal $I$ contains a regular sequence $\underline{\alpha}$ of $n$ forms of degree $\delta$ and the socle of $S/(\underline{\alpha})$ sits in degree $ (\delta-1)n.$ 
Since \(I\) is not a complete intersection and \(s\ge (\delta-1)n-1\), it follows that
\(s=(\delta-1)n-1\).

Write $a=\dim_k [{\rm soc} \left(\frac{S}{I}\right)]_{(\delta-1)n-1}.$  From the free resolution in (\ref{mappingcone}) we see that the ideal $J=(\underline{\alpha}):I,$ contains $a\ge 1$ independent linear forms (see \ref{mingencanonical}), and furthermore the last module in the free resolution of $J$ is $S^{d(I)}(-\delta n+\delta).$ Hence $J$ contains a regular sequence $\underline{\beta}$ of $a$ linear forms and $n-a$ forms of degree $\delta.$ 


Let $K=(\underline{\beta}): J.$  We write the end of the mapping cone construction whose dual provides a possibly non-minimal free resolution of $S/K$. Let $\mathbf{K}$ be the Koszul complex resolving $S/(\underline{\beta}),$ let $\mathbf{F}$  be a minimal graded free resolution of $S/J,$ and let $u:\mathbf{K}\to \mathbf{F}$ be a homogeneous morphism of complexes induced by the inclusion of $(\underline{\beta})\subset J$:
\[
\begin{tikzcd}[column sep=large, row sep=large]
0 \arrow[r] & S^{d(I)}(-\delta n +\delta) \arrow[r] & \cdots  \\
0 \arrow[r] &
S(-\delta n+(\delta-1)a) \arrow[u,"u_n"] \arrow[r] &
\cdots   .
\end{tikzcd}
\]
The existence of a non-zero map $u_n$ forces $\delta n -\delta a +a\ge \delta n-\delta,$ thus $a\ge \delta (a-1).$ Thus $a=1$ because $\delta \ge 3.$ In this case we claim that $K$ contains $d(I)$ linearly independent linear forms, and hence $d(I)\le n.$ Indeed, according to (\cite{PS} see also \cite{HU85}*{Proposition 2.2}) 
\[K/(\underline{\beta}) \cong \omega_{S/J}  \cong I/(\alpha)\] 
up to shifts, so these $d(I)$ linear forms are minimal generators of $K,$ and in particular are linearly independent. 

If $d(I)\ge n-2,$ then after a change of variable we have $K=(x_1, \ldots, x_{n-2}, K'),$ where $K'\subset k[x_{n-1},x_n].$ Since $K'$ is licci, $K$ is licci and $I$ is then licci as well. 
\end{proof}

\begin{theorem}\label{quadrics} Let $S=k[x_1,\ldots, x_n]$ be a polynomial ring over a field $k$ and 
$I$ be
a  homogeneous ideal containing a regular sequence of $n$ quadrics. 
Write $s$ for the highest socle degree of $S/I.$ If $I$ is licci, then the deviation of $I$ is at most $s=s(I)$, i.e. Conjecture $\ref{localconj}$ holds for $I$. 
\end{theorem}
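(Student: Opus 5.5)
We plan to follow the strategy of Proposition \ref{ACI} with $\delta=2$, combined with the regularity bound \cite{HU87}*{Corollary 5.13}. First we reduce to the case $I\subset\m^2$. If $I$ contains a linear form $\ell$, we pass to $S/(\ell)$, after extending $k$ to be infinite if necessary. This preserves licci-ness, the deviation and the socle degree $s$. The image of $I$ still contains $n-1$ quadrics forming a maximal regular sequence, since general combinations of the images of the original quadrics suffice. Induction on $n$ then handles this case, with the base cases given by Theorem \ref{codimtwo}. We may also assume $I$ is not a complete intersection, since otherwise $d(I)=0$.

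So let $I\subset \m^2$ be licci, $\m$-primary and not a complete intersection, and let $\underline{\alpha}\subset I$ be a regular sequence of $n$ quadrics. Since $I\subset\m^2$, these quadrics are minimal generators of $I$. The socle of $S/(\underline{\alpha})$ lies in degree $n$, so $s\le n-1$. On the other hand, $t_1(S/I)=2$ and \cite{HU87}*{Corollary 5.13} give $s+n=T_n(S/I)>2(n-1)$. Hence $s=n-1$, and the statement reduces to proving
\[d(I)\le n-1,\qquad\text{equivalently}\qquad \mu(I)\le 2n-1.\]

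To prove this we link twice, exactly as in Proposition \ref{ACI}. Let $J=(\underline{\alpha}):I$ and let $a\ge1$ be the dimension of the socle of $S/I$ in degree $n-1$. By the mapping cone (\ref{mappingcone}) and Remark \ref{mingencanonical}, $J$ contains $a$ independent linear forms, and $T_n(S/J)=2n-t_1(S/I)=2n-2$. Choose a regular sequence $\underline{\beta}\subset J$ consisting of these $a$ linear forms together with $n-a$ general combinations of the quadrics $\underline{\alpha}$. Its Koszul complex ends in $S(-(2n-a))$. A nonzero comparison map $u_n$ into the last module of the resolution of $S/J$ forces $2n-a\ge 2n-2$, so $a\le 2$.

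If $a=2$, the entries of $u_n$ against the twists coming from the quadric generators of $I$ are units. Then $(\underline{\beta}):J=S$, so $J=(\underline{\beta})$ is a complete intersection of two linear forms and $n-2$ quadrics. In this case $I=(\underline{\alpha}):(\underline{\beta})$ is explicitly computable, and we will bound $\mu(I)$ directly, reducing to the two-variable situation of Theorem \ref{codimtwo}.

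If $a=1$, set $K=(\underline{\beta}):J$. By \cite{PS} (see also \cite{HU85}*{Proposition 2.2}), $K/(\underline{\beta})\cong\omega_{S/J}\cong I/(\underline{\alpha})$ up to shift, so $K$ has $d(I)$ minimal generators independent modulo $(\underline{\beta})$. A generator of $I$ of degree $e$ corresponds to one of degree $e-1$ in $K$. In particular, every quadric generator of $I$ beyond $\underline{\alpha}$ gives a linear form in $K$, and these linear forms are independent modulo the linear form $\ell\in\underline{\beta}$. This bounds the number of extra quadric generators of $I$ by $n-1$.

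The main obstacle is the minimal generators of $I$ of degree $\ge3$, since these give generators of $K$ of degree $\ge2$ that are not controlled by counting linear forms. To handle them, we will first cut $K$ by its linear forms, which are at most $n-1$ in number, to pass to a licci ideal $K'$ in fewer variables. We then apply \cite{HU87}*{Corollary 5.13} to $K'$ and to $J$, using $T_n(S/J)=2n-2$, to show that the total count of generators of $K$ modulo $(\underline{\beta})$ is still at most $n-1$. If this does not close the argument, we will induct on $n$ using the linked ideal $K'$, which again contains a regular sequence of quadrics modulo its linear forms.
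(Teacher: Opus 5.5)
Your reduction to $I\subset\m^2$ with $s=n-1$ is fine, but the step $a\le 2$ fails. In Proposition \ref{ACI} this bound uses that $I$ is equigenerated, so the last module of the resolution of $S/J$ sits in the single twist $\delta n-\delta$. Here $I$ may have minimal generators of degree $\ge 3$. Since the $\alpha_i$ are minimal generators of $I$, their summands cancel in (\ref{mappingcone}). The last module of the minimal resolution of $S/J$ is therefore $\bigoplus_j S(-(2n-e_j))$, where the $e_j$ are the degrees of the remaining $d(I)$ minimal generators of $I$. So $T_n(S/J)=2n-2$ holds only if $I$ has more than $n$ independent quadrics. Moreover, a nonzero $u_n\colon S(-(2n-a))\to\bigoplus_j S(-(2n-e_j))$ only forces $a\le e_j$ for some $j$. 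You compared with the largest shift $T_n(S/J)$, whereas the relevant one is the smallest, $t_n(S/J)$.

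Concretely, for $n\ge 3$ the ideal $I=(\underline{\alpha}):\m$ is licci, lies in $\m^2$ and is not a complete intersection. It has $J=\m$, hence $a=n$. Your treatment of $a=2$ also fails. Take $\underline{\alpha}=(x_1^2,\dots,x_4^2)$ and $I=(x_1^2,x_2^2,x_3^2,x_4^2,x_1x_3x_4,x_2x_3x_4)$. Then $J=(x_1^2,x_1x_2,x_2^2,x_3,x_4)$, so $a=2$, yet $J$ is not a complete intersection. Here $I$ has no extra quadric generators, and $u_4$ has linear, not unit, entries.

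Even granting the case split, the case $a=1$, which carries the real content, is only a plan. You rightly isolate the generators of $I$ of degree $\ge 3$ as the obstacle. However, invoking \cite{HU87}*{Corollary 5.13} for $K'$ and $J$ does not bound $\mu(K/(\underline{\beta}))$, and the fallback induction is not set up. The paper's argument works for every $a\ge1$:
\begin{itemize}
\item Write $J=(J',x_{n-a+1},\dots,x_n)$ with $J'$ licci in $n-a$ variables. Then either $J$ is a complete intersection (and $d(I)=1$) or $s(J)=n-a-1$.
\item Link $J$ by $\underline{\gamma}$, consisting of $n-a$ quadrics of $J'$ together with the $a$ linear forms. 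Then $K=(\underline{\gamma}):J$ contains exactly $a+b$ independent linear forms, where $b$ is the dimension of the top-degree socle of $S/J$.
\item Induction applied to $K'$ gives $d(K)\le s(K)\le n-a-b$.
\end{itemize}
The key step you are missing is the comparison $d(I)=\mu(K/(\underline{\gamma}))\le d(K)+b$. Equivalently, the image of $(\underline{\gamma})$ in $K/\m K$ has dimension at least $n-b$. This follows from Krull's altitude theorem after factoring out the $a+b$ linear forms. The image of $K$ then contains no linear forms, while the images of the $n-a$ quadrics generate an ideal of height $n-a-b$. Together these give $d(I)\le n-a\le n-1=s$, with the $b$ extra linear forms cancelling in the count.
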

\begin{proof} 
We proceed by induction on $n.$ If $n=1$ the claim is clear and if $n=2$ the claim follows from Theorem~\ref{codimtwo}. Therefore we may assume $n\ge 3.$ 

If $I$ contains a linear form $\ell$, after a change of variables we can write  $I=(I', x_n)$ with $I'\subset k[x_1, \ldots, x_{n-1}].$ We claim that the induction hypothesis applies to $I'.$ Indeed, $I'$ is still homogeneous, contains a regular sequence of $n-1$ quadrics, and 
$I'$ is licci according to \cite{HU07}*{Proposition 3}. Notice that $d(I')=d(I)$ and $s$ is still the top socle degree of $I'$. By induction we obtain $d(I)=d(I')\le s,$ as claimed. 

Thus we may assume that $I \subset \m^2.$ Since 
$I$ is licci, the maximal last shift in a minimal homogeneous free resolution of $I$ is at least $2(n-1)+1=2n-1$ \cite{HU87}*{Corollary 5.13(a)}, which forces $s\ge n-1.$ On the other hand the socle of a complete intersection defined by $n$ quadrics has degree $n,$ thus either $I$ is a complete intersection and $d(I)=0$ or $s=n-1.$
Hence we may assume $s=n-1.$ 

Write $a$ for the number of socle elements in $S/I$ of degree $n-1.$ Notice $a\ge 1.$ Choose a regular sequence $\underline{\alpha}$ of $n$ quadrics contained in $I.$   The ideal $J=(\underline{\alpha}) :I$ is licci, has type $d(I),$  and contains exactly $a$ linearly independent linear forms, as can be seen from the free resolution given in (\ref{mappingcone}). Without loss of generality, we may write $J=(J', x_{n-a+1}, \ldots, x_n),$ where $J'\subset k[x_1, \ldots, x_{n-a}].$ The ideal $J'$ is still licci \cite{HU07}*{Proposition 3}, is contained in $(x_1, \ldots,x_{n-a})^2,$ and contains a regular sequence of $n-a$ quadrics. Thus either $J'$ is generated by the regular sequence of quadrics, in which case $d(I)\le 1\le n-1=s,$ or  $s(J')=n-a-1$ and hence $s(J)=n-a-1.$ Thus we may assume $s(J)=n-a-1.$

Let $b$ be the number of socle elements in $S/J$ of degree $n-a-1.$  Choose a regular sequence $\underline{\beta}$ of $n-a$ quadrics in $J'$, so that $\underline{\gamma}=\underline{\beta}, x_{n-a+1}, \ldots, x_n$ form a regular sequence. 
The ideal $K=(\underline{\gamma}):J $ is licci and contains exactly $a+b$ linearly independent linear forms. 
We may write $K=(K',  x_{n-a-b+1}, \ldots, x_n),$ where  $K'\subset k[x_1, \ldots, x_{n-a-b}].$ The ideal 
$K'$ is still licci, is contained in $(x_1, \ldots,x_{n-a-b})^2,$ and contains a regular sequence 
of $n-a-b$ quadrics. Thus $s(K)=s(K')\le n-a-b$. By induction $d(K)=d(K')\le s(K')=s(K)\le n-a-b.$

Notice that $$I/(\underline{\alpha})\cong\omega_{S/J}\cong K/(\underline{\gamma})$$
up to shifts, (7\cite{PS} see also \cite{HU85}*{Proposition 2.2}).
Since $\underline{\alpha}$ are part of a minimal generating sequence of $I$, it follows that $$d(I)=\mu(I/(\underline{\alpha}))=\mu(K/(\underline{\gamma})).$$

We claim that $\mu(K/(\underline{\gamma}))\le\mu(K)-n+b=d(K)+b,$ in other words, the image 
of $(\gamma)$ in $K/\m K$ has dimension at least $n-b$ over $k.$ Let $^{-}$ denote images under
the natural projection map $S \rightarrow S/(x_{n-a-b+1}, \ldots, x_n).$ It suffices to show that the image of $(\gamma)$ in $\overline{K}/\overline{\m}\overline{K}$ has dimension at least $n-a-b$ over $k.$ This follows from
Krull's Altitude Theorem because $\overline K$ contains no linear forms, $(\overline{\underline{\gamma}})$ is generated by quadrics, and $(\overline{\underline{\gamma}})$
has codimension at least $n-a-b.$

In particular
$$
d(I)=\mu(K/(\underline{\gamma}))\le d(K)+b \le s(K)+b\le n-a \le n-1=s
$$
as asserted. 
\end{proof}



\vspace{.3cm}

\section{The conjecture for known classes of licci ideals} 

The following tables list ideals from \cites{RT19,KTY2013,RTY2020,ERT2020,RT2023}. Each table includes the deviation and regularity, so that the conjecture can be checked for every ideal listed. A few of these ideals are not licci, and these in fact fail to satisfy the conjecture, however every licci ideal satisfies the conjecture.

\begin{longtable}{@{}l c c c c c@{}}
\caption{\cite{RT19}*{Theorem 3.5} deviation and maximal socle degree .}
\label{tab:RT2019-thm35}\\
\toprule
\multicolumn{1}{c}{$I$} & $n$ & $c$ & $\mu(I)$ & $d(I)$ & socle deg.\\
\midrule
\endfirsthead
\toprule
\multicolumn{1}{c}{$I$} & $n$ & $c$ & $\mu(I)$ & $d(I)$ & socle deg.\\
\midrule
\endhead
\bottomrule
\endfoot

$(x_1x_2x_3x_4x_5)$ & 5 & 1 & 1 & 0 & 4\\
\midrule
$(x_1x_2,\,x_3x_4x_5x_6)$ & 6 & 2 & 2 & 0 & 4\\
$(x_1x_2x_3,\,x_4x_5x_6)$ & 6 & 2 & 2 & 0 & 4\\
\midrule
$(x_1x_2,\,x_3x_4,\,x_5x_6x_7)$ & 7 & 3 &  3 & 0 & 4\\
$(x_3x_6,\,x_3x_7,\,x_4x_7,\,x_1x_2x_4x_5,\,x_1x_2x_5x_6)$ & 7 & 3 & 5 & 2 & 4\\
$(x_3x_6,\,x_3x_7,\,x_1x_4x_7,\,x_2x_4x_6,\,x_1x_2x_4x_5)$ & 7 & 3 & 5 & 2 & 4\\
$(x_4x_7,\,x_1x_3x_6,\,x_1x_3x_7,\,x_2x_4x_5,\,x_2x_4x_6)$ & 7 & 3 & 5 & 2 & 4\\
$(x_1x_3x_6,\,x_1x_3x_7,\,x_1x_4x_7,\,x_2x_4x_5,\,x_2x_4x_7,\,x_2x_5x_6,\,x_3x_5x_6)$ & 7 & 3 & 7 & 4 & 4\\
\midrule
$(x_1x_2,\,x_3x_4,\,x_5x_6,\,x_7x_8)$ & 8 & 4 & 4 & 0 & 4\\
$(x_3x_6,\,x_3x_7,\,x_4x_5,\,x_4x_7,\,x_5x_6,\,x_1x_2x_8)$ & 8 & 4 & 6 & 2 & 4\\
$(x_2x_8,\,x_3x_6,\,x_3x_7,\,x_4x_7,\,x_1x_4x_5,\,x_1x_5x_6)$ & 8 & 4 & 6 & 2 & 4\\
$(x_2x_8,\,x_4x_7,\,x_5x_6,\,x_1x_3x_6,\,x_1x_3x_7,\,x_1x_3x_8,\,x_2x_4x_5)$ & 8 & 4 & 7 & 3 & 4\\
$(x_2x_8,\,x_3x_7,\,x_4x_7,\,x_1x_3x_6,\,x_1x_3x_8,\,x_1x_5x_6,\,x_2x_4x_5,\,x_4x_5x_6)$ & 8 & 4 & 8 & 4 & 4\\
$(x_2x_8,\,x_3x_7,\,x_3x_8,\,x_4x_7,\,x_1x_3x_6,\,x_1x_5x_6,\,x_2x_4x_5)$ & 8 & 4 & 7 & 3 & 4\\
$(x_2x_8,\,x_3x_6,\,x_3x_7,\,x_3x_8,\,x_4x_7,\,x_1x_5x_6,\,x_1x_2x_4x_5)$ & 8 & 4 & 7 & 3 & 4\\
\midrule
$(x_1x_9,\,x_2x_8,\,x_3x_6,\,x_3x_7,\,x_4x_5,\,x_4x_7,\,x_5x_6)$ & 9 & 5 & 7 & 2 & 4\\
$(x_1x_9,\,x_2x_8,\,x_3x_6,\,x_3x_7,\,x_3x_8,\,x_4x_7,\,x_5x_6,\,x_2x_4x_5)$ & 9 & 5 & 8 & 3 & 4\\
$(x_1x_9,\,x_2x_8,\,x_2x_9,\,x_3x_8,\,x_4x_7,\,x_5x_6,\,x_1x_3x_6,\,x_1x_3x_7,\,x_2x_4x_5)$ & 9 & 5 & 9 & 4 & 4\\
$(x_1x_9,\,x_2x_8,\,x_3x_6,\,x_3x_7,\,x_3x_8,\,x_3x_9,\,x_4x_7,\,x_5x_6,\,x_1x_2x_4x_5)$ & 9 & 5 & 9 & 4 & 4\\
\end{longtable}

\begin{longtable}{@{}l c c c c c@{}}
\caption{
\cite{RT2023}*{Theorem 4.1}: deviation and maximal socle degree.}
\label{tab:RT2023-thm41}\\
\toprule
\multicolumn{1}{c}{$I$} & $f_2$ & $r$ & $\mu(I)$ & $d(I)$ & socle deg.\\
\midrule
\endfirsthead
\toprule
\multicolumn{1}{c}{$I$} & $f_2$ & $r$ & $\mu(I)$ & $d(I)$ & socle deg.\\
\midrule
\endhead
\bottomrule
\endfoot

$(cd,\,be,\,af)$ & 8 & 1 & 3 & 0 & 3\\
$(acd,\,be,\,ade,\,bf,\,cf)$ & 8 & 1 & 5 & 2 & 3\\
\midrule
$(df,\,cd,\,cf,\,abe)$ & 9 & 2 & 4 & 1 & 3\\
$(be,\,bf,\,cf,\,acd,\,adef)$ & 9 & 2 & 5 & 2 & 3\\
$(be,\,bf,\,cde,\,acd,\,adf,\,acef)$ & 10 & 2 & 6 & 3 & 3\\
$(bf,\,cd,\,aef,\,abe)$ & 10 & 2 & 4 & 1 & 3\\
$(cf,\,df,\,abe,\,acd,\,bde)$ & 10 & 2 & 5 & 2 & 3\\
$(af,\,be,\,bcd,\,cdf,\,acde)$ & 10 & 2 & 5 & 2 & 3\\
$(cd,\,abe,\,abf,\,aef,\,bdf,\,cef)$ & 11 & 2 & 6 & 3 & 3\\
$(bf,\,abe,\,acd,\,ace,\,bcd,\,def)$ & 11 & 2 & 6 & 3 & 3\\
\midrule
$(cf,\,df,\,abe,\,cde,\,abcd)$ & 11 & 3 & 5 & 2 & 3\\
$(bf,\,cf,\,ade,\,bde,\,abcd,\,abce)$ & 11 & 3 & 6 & 3 & 3\\
$(bf,\,cd,\,ade,\,abce,\,acef)$ & 11 & 3 & 5 & 2 & 3\\
$(be,\,acd,\,adf,\,bcf,\,cef)$ & 12 & 3 & 5 & 2 & 3\\
$(acd,\,abce,\,ade,\,bde,\,abf,\,cf)$ & 12 & 3 & 6 & 3 & 3\\
$(bcd,\,abce,\,bde,\,cde,\,af,\,bef)$ & 12 & 3 & 6 & 3 & 3\\
$(bcd,\,bce,\,bde,\,cde,\,af)$ & 12 & 3 & 5 & 2 & 3\\
\midrule
$(abcd,\,abe,\,acde,\,bcde,\,cf,\,df)$ & 12 & 4 & 6 & 3 & 3\\
$(acd,\,abce,\,ade,\,cde,\,bf,\,acef)$ & 13 & 4 & 6 & 3 & 3\\
$(abcd,\,abce,\,ade,\,bde,\,abf,\,cf)$ & 13 & 4 & 6 & 3 & 3\\
\end{longtable}

\begin{longtable}{@{}l c c c c c@{}}
\caption{
\cite{RTY2020}*{Theorem 3.2}: deviation, maximal socle degree, and licci status.}
\label{tab:RTY2020-thm32}\\
\toprule
\multicolumn{1}{c}{$I$} & $f_2$ & $\mu(I)$ & $d(I)$ & socle deg. & licci\\
\midrule
\endfirsthead
\toprule
\multicolumn{1}{c}{$I$} & $f_2$ & $\mu(I)$ & $d(I)$ & socle deg. & licci\\
\midrule
\endhead
\bottomrule
\endfoot

$(df,\,cd,\,cf,\,abe)$ & 9 & 4 & 1 & 3 & yes\\
$(be,\,bf,\,cf,\,acd,\,adef)$ & 9 & 5 & 2 & 3 & yes\\
\midrule
$(be,\,bf,\,cde,\,acd,\,adf,\,acef)$ & 10 & 6 & 3 & 3 & yes\\
$(bf,\,cd,\,aef,\,abe)$ & 10 & 4 & 1 & 3 & yes\\
$(cf,\,df,\,abe,\,acd,\,bde)$ & 10 & 5 & 2 & 3 & yes\\
$(af,\,be,\,bcd,\,cdf,\,acde)$ & 10 & 5 & 2 & 3 & yes\\
\midrule
$(cd,\,abe,\,abf,\,aef,\,bdf,\,cef)$ & 11 & 6 & 3 & 3 & yes\\
$(bf,\,abe,\,acd,\,ace,\,bcd,\,def)$ & 11 & 6 & 3 & 3 & yes\\
\midrule
$(abf,\,ace,\,acf,\,ade,\,bcd,\,bde,\,bef,\,cdf)$ & 12 & 8 & 5 & 3 & no\\
\end{longtable}

\begin{longtable}{@{}l c c c c c@{}}
\caption{
\cite{RTY2020}*{Theorem 4.1}: deviation, maximal socle degree, and licci status.}
\label{tab:RTY2020-thm41}\\
\toprule
\multicolumn{1}{c}{$I$} & $f_3$ & $\mu(I)$ & $d(I)$ & socle deg. & licci\\
\midrule
\endfirsthead
\toprule
\multicolumn{1}{c}{$I$} & $f_3$ & $\mu(I)$ & $d(I)$ & socle deg. & licci\\
\midrule
\endhead
\bottomrule
\endfoot

$(cf,\,cg,\,fg,\,abde)$ & 12 & 4 & 1 & 4 & yes\\
$(bg,\,cf,\,cg,\,adef,\,abcde)$ & 12 & 5 & 2 & 4 & yes\\
\midrule
$(cg,\,dg,\,bcf,\,ade,\,abefg)$ & 13 & 5 & 2 & 4 & yes\\
\midrule
$(cf,\,de,\,abdg,\,abeg)$ & 14 & 4 & 1 & 4 & yes\\
$(cg,\,dg,\,cdf,\,abef,\,abde)$ & 14 & 5 & 2 & 4 & yes\\
$(cg,\,dg,\,cef,\,abef,\,abde,\,abcdf)$ & 14 & 6 & 3 & 4 & yes\\
$(cf,\,cg,\,aeg,\,abde,\,bdfg)$ & 14 & 5 & 2 & 4 & yes\\
$(bg,\,cf,\,acde,\,adeg,\,abdef)$ & 14 & 5 & 2 & 4 & yes\\
\midrule
$(cg,\,acf,\,bde,\,deg,\,abfg)$ & 15 & 5 & 2 & 4 & yes\\
$(ef,\,acg,\,bdg,\,acf,\,abcde)$ & 15 & 5 & 2 & 4 & yes\\
$(cf,\,aeg,\,ceg,\,bdf,\,abdg,\,abcde)$ & 15 & 6 & 3 & 4 & yes\\
$(de,\,dg,\,bcfg,\,abcf,\,acef,\,abeg)$ & 15 & 6 & 3 & 4 & yes\\
$(bg,\,bcf,\,ade,\,cfg)$ & 15 & 4 & 1 & 4 & yes\\
\midrule
$(de,\,acf,\,bdg,\,abeg,\,bcfg)$ & 16 & 5 & 2 & 4 & yes\\
$(cf,\,bdg,\,cdg,\,aefg,\,abde,\,abeg)$ & 16 & 6 & 3 & 4 & yes\\
$(acf,\,bcf,\,deg,\,bde,\,aeg,\,bdf,\,abcdg)$ & 16 & 7 & 4 & 4 & yes\\
$(bg,\,acf,\,cfg,\,acde,\,adeg,\,bdef)$ & 16 & 6 & 3 & 4 & yes\\
$(cf,\,acg,\,abf,\,bdeg,\,bcde,\,adeg)$ & 16 & 6 & 3 & 4 & yes\\
$(bg,\,dfg,\,bcf,\,adef,\,acef,\,acde)$ & 16 & 6 & 3 & 4 & yes\\
$(cg,\,ade,\,bef,\,abdf)$ & 16 & 4 & 1 & 4 & yes\\
\midrule
$(acf,\,beg,\,bde,\,bdg,\,ceg,\,adef)$ & 17 & 6 & 3 & 4 & yes\\
$(acf,\,ade,\,def,\,bcg,\,bfg)$ & 17 & 5 & 2 & 4 & yes\\
$(adg,\,bdf,\,bdg,\,bef,\,cef,\,abcg,\,acde)$ & 17 & 7 & 4 & 4 & yes\\
$(acf,\,acg,\,bdf,\,ceg,\,efg,\,abde)$ & 17 & 6 & 3 & 4 & yes\\
$(acf,\,bcf,\,bcg,\,bdg,\,deg,\,abde,\,abef)$ & 17 & 7 & 4 & 4 & yes\\
$(cf,\,aeg,\,abde,\,abdg,\,bcdg,\,bdef)$ & 17 & 6 & 3 & 4 & yes\\
$(bg,\,bcf,\,acde,\,acdf,\,adef,\,aefg,\,cdeg)$ & 17 & 7 & 4 & 4 & yes\\
$(acf,\,acg,\,bde,\,bdf,\,beg,\,defg)$ & 17 & 6 & 3 & 4 & yes\\
\midrule
$(ade,\,bcf,\,bcg,\,bfg,\,aceg,\,adfg,\,cdef)$ & 18 & 7 & 4 & 4 & yes\\
$(ade,\,acf,\,bcg,\,bfg,\,bdef,\,cdeg)$ & 18 & 6 & 3 & 4 & yes\\
$(acg,\,bde,\,bef,\,cdg,\,abcf,\,adef,\,adfg)$ & 18 & 7 & 4 & 4 & yes\\
$(adf,\,adg,\,aeg,\,bcf,\,bcde,\,bdeg,\,cefg)$ & 18 & 7 & 4 & 4 & yes\\
$(acg,\,ade,\,adg,\,cde,\,abcf,\,bcef,\,bdfg,\,befg)$ & 18 & 8 & 5 & 4 & no\\
$(ade,\,beg,\,cdg,\,deg,\,abcf,\,abdf,\,acfg,\,bcef)$ & 18 & 8 & 5 & 4 & no\\
$(bde,\,bdf,\,beg,\,ceg,\,abcf,\,acdg,\,acef,\,adfg)$ & 18 & 8 & 5 & 4 & no\\
\end{longtable}

\renewcommand{\arraystretch}{1.5}
\begin{longtable}{@{}c l c c c c c@{}}
\caption{\cite{KTY2013}*{Theorem 3.7}: deviation and maximal socle degree.
Here $g=\operatorname{ht}I(G)$.}
\label{tab:KTY2013-thm37}\\
\toprule
$G$ & \multicolumn{1}{c}{$I(G)$} & $|V|$ & $\mu(I)$ & $d(I)$ & socle deg. & range\\
\midrule
\endfirsthead
\toprule
$G$ & \multicolumn{1}{c}{$I(G)$} & $|V|$ & $\mu(I)$ & $d(I)$ & socle deg. & range\\
\midrule
\endhead
\bottomrule
\endfoot

$G_a$ & $\{x_iy_i\}_{1\le i\le g}$ & $2g$ & $g$ & $0$ & $g$ & $g\ge 1$\\
$G_b$ & $\{x_iy_i\}_{1\le i\le g}\cup\{x_jy_1\}_{2\le j\le h}$ & $2g$ & $g+h-1$ & $h-1$ & $g-1$ & $2\le h\le g$\\
$G_{c_1}$ & $\{x_iy_i\}_{1\le i\le g-2}\cup\{x_jz_1\}_{1\le j\le h}\cup\{z_1z_2,z_2z_3,z_3z_1\}$ & $2g-1$ & $g+h+1$ & $h+1$ & $g-1$ & $0\le h\le g-2$\\
$G_{c_2}$ & $\{x_iy_i\}_{1\le i\le g-3}\cup\{z_1z_2,z_2z_3,z_3z_4,z_4z_5,z_5z_1\}$ & $2g-1$ & $g+2$ & $2$ & $g-1$ & $g\ge 3$\\
\end{longtable}

\renewcommand{\arraystretch}{1.4}
\begin{longtable}{@{}l c c c c c@{}}
\caption{\cite{ERT2020}*{Theorem 3.5}: deviation and maximal socle degree.
Here $\mu(J_G)=|E(G)|$ and $\operatorname{ht}J_G=n-1$.}
\label{tab:ERT2020-thm35}\\
\toprule
$G$ & $|V|$ & $\mu(J_G)$ & $d(J_G)$ & socle deg. & range\\
\midrule
\endfirsthead
\toprule
$G$ & $|V|$ & $\mu(J_G)$ & $d(J_G)$ & socle deg. & range\\
\midrule
\endhead
\bottomrule
\endfoot

path graph $P_n$ & $n$ & $n-1$ & $0$ & $n-1$ & $n\ge 1$\\
triangle with three paths (Fig.~3) & $n=3+r+s+t$ & $n$ & $1$ & $n-2$ & $r,s,t\ge 0$\\
\end{longtable}

\vspace{.15cm}

\bibliographystyle{acm}
\bibliography{references}

\end{document}